\theoremstyle{plain}
\newtheorem{theorem}{Theorem}[section]
\newtheorem{lemma}[theorem]{Lemma}
\newtheorem{corollary}[theorem]{Corollary}
\theoremstyle{definition}
\newtheorem{definition}[theorem]{Definition}
\newtheorem{question}[theorem]{Question}
\newtheorem{conjecture}[theorem]{Conjecture}
\theoremstyle{remark}
\newtheorem{remark}[theorem]{Remark}
\newtheorem*{claim}{Claim}
\newtheorem*{remark*}{Remark}
\newtheorem*{acknowledgments}{Acknowledgments}
\newtheorem*{notation}{Notation}
\numberwithin{figure}{section}
\newcommand{\Int}{\mathrm{int}}
\begin{document}

\title[On the degeneration of tunnel numbers]{On the degeneration of tunnel numbers under connected sum}
\author{Tao Li}
\address{Department of Mathematics \\
 Boston College \\
 Chestnut Hill, MA 02467\\
U.S.A.}
\email{taoli@bc.edu}
\thanks{The first author is partially supported by NSF grants DMS-1005556 DMS-1305613, and the second author is partially supported by NSFC 11171108.}

\author{Ruifeng Qiu}
\address{Department of Mathematics \\
 East China Normal University \\
 Shanghai, 200241\\ P.R.China}
\email{rfqiu@math.ecnu.edu.cn}

\begin{abstract}
We show that, for any integer $n\ge 3$, there is a prime knot $k$ such that (1) $k$ is not meridionally primitive, and (2) for every $m$-bridge knot $k'$ with $m\leq n$, the tunnel numbers satisfy $t(k\# k')\le t(k)$.  This gives counterexamples to a conjecture of Morimoto and Moriah on tunnel number under connected sum and meridionally primitive knots.
\end{abstract}

\maketitle

\section{Introduction}\label{Sintro}

Let $M$ be a compact 3--manifold. If there is a closed surface $S$ that cuts $M$ into two compression bodies $V$ and $W$ such that $S=\partial_{+} V=\partial_{+} W$, then we say that $S$ is a Heegaard surface of $M$, and the decomposition, denoted by $M=V\cup_{S} W$, is called a Heegaard splitting of $M$. If $g(S)$ is minimal among all Heegaard surfaces of $M$, then $V\cup_{S} W$ is called a minimal Heegaard splitting, and the Heegaard genus $g(M)$ is defined to be $g(S)$.  Any orientable compact 3--manifold admits a Heegaard splitting.

Let $k$ be a knot in the 3--sphere $S^{3}$ and let $E(k)=S^3-N(k)$, where $N(k)$ is an open tubular neighborhood of $k$. There is always a collection of disjoint and embedded arcs $\tau_{1},\ldots, \tau_{t}$ in $S^3$, such that $\tau_i\cap k=\partial\tau_i$ for each $i$ and
$H=S^3-N((\cup_{i=1}^{t}\tau_{i})\cup k)$ is a handlebody.
This means that $\partial H$ is a Heegaard surface of $E(k)$.  These arcs $\tau_{1},\ldots, \tau_{t}$ are called unknotting tunnels of $k$ and we say that they form a tunnel system for $k$.  The tunnel number of $k$, denoted by $t(k)$, is the minimal number of arcs in a tunnel system for $k$.  Let $g(E(k))$ be the Heegaard genus of the knot exterior $E(k)$.  Clearly $g(E(k))=t(k)+1$. 

For two knots $k_{1}$ and $k_{2}$, we denote the connected sum of $k_{1}$ and $k_{2}$ by $k_{1}\# k_{2}$.  Given any tunnel systems for $k_1$ and $k_2$, one can obtain a tunnel system for $k_1\# k_2$ by putting together the tunnel systems for $k_1$ and $k_2$ plus an extra tunnel lying in the decomposing 2--sphere.  This means that the tunnel numbers of these knots satisfy the following inequality:
$$t(k_1\# k_2)\le t(k_1)+t(k_2)+1.$$
An interesting question in knot theory and 3--manifold topology is to study the relation between the tunnel number of a composite knot and the tunnel numbers of its factors.  The following is a list of some results in this direction:

(1) Norwood proved that tunnel number one knots are prime \cite{N}, Scharlemann and Schultens proved that the tunnel number of the connected sum of $n$ nontrivial knots is at least $n$ and $1/3$ of the sum of the tunnel numbers of the factors. See \cite{SS1} and \cite{SS2}.

(2) For two knots $k_{1}$ and $k_{2}$, the equality $t(k_{1}\# k_{2})=t(k_{1})+t(k_{2})+1$ is called the super additivity of tunnel number under connected sum.  Morimoto \cite{Mo1} first showed that the super additivity does not always hold.  Kobayashi \cite{K} gave examples of composite knots with $t(k_{1}\# k_{2})\le t(k_{1})+t(k_{2})-n$ for any integer $n$.  Nogueira \cite{No} found the first prime knots $k_1$, $k_2$ with $t(k_{1}\# k_{2})=t(k_{1})+t(k_{2})-2$. 
Schultens \cite{Sch1} proved that if both $k_{1}$ and $k_{2}$ are small knots, then $t(k_{1}\# k_{2})\geq t(k_{1})+t(k_{2})$, and Morimoto \cite{Mo2} proved that if both $k_{1}$ and $k_{2}$ are small knots, then the super additivity holds if and only if none of $k_{1}$ and $k_{2}$ is meridionally primitive (see section~\ref{SA} for definition).  As an extension of Morimoto's result, Gao, Guo and Qiu \cite{GQ} proved that if a minimal Heegaard splitting of $E(k_{1})$ has high distance while a minimal Heegaard splitting of $E(k_{2})$ has distance at least 3, then $t(k_{1}\# k_{2})=t(k_{1})+t(k_{2})+1$.

Note that if a knot $k_1$ is meridionally primitive, then $t(k_{1}\# k_{2})< t(k_{1})+t(k_{2})+1$ for any knot $k_2$ (see section~\ref{SA} for details).  Based on the result in \cite{Mo2}, Morimoto conjectured that $t(k_{1}\# k_{2})=t(k_{1})+t(k_{2})+1$ if and only if  none of $k_{1}$ and $k_{2}$ is meridionally primitive (this was also conjectured by Moriah \cite{Mor1}).  Kobayashi and Rieck \cite{KR} gave a counterexample to this conjecture, but none of the two factors $k_{1}$ and $k_{2}$ in their example is prime.  The following is a modified version of Morimoto's conjecture (this is also conjectured by Moriah, see \cite[Conjecture 7.14]{Mori2}).

\begin{conjecture}\label{conj1}
 Suppose that both $k_{1}$ and $k_{2}$ are prime. Then $t(k_{1}\# k_{2})=t(k_{1})+t(k_{2})+1$ if and only if  neither $k_{1}$ nor $k_{2}$ is meridionally primitive.
\end{conjecture}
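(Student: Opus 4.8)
The statement is a biconditional, and I would treat its two directions separately. The forward direction is essentially known: its contrapositive asserts that if one of the factors is meridionally primitive then $t(k_1\#k_2)<t(k_1)+t(k_2)+1$, and this is precisely the standard observation recalled in the introduction (a meridionally primitive tunnel system for, say, $k_1$ lets one amalgamate the two tunnel systems across the decomposing sphere while absorbing the extra connecting tunnel, saving one handle). So the entire content lies in the reverse direction: assuming neither $k_1$ nor $k_2$ is meridionally primitive, one must prove $t(k_1\#k_2)=t(k_1)+t(k_2)+1$. Since the upper bound $t(k_1\#k_2)\le t(k_1)+t(k_2)+1$ always holds, this reduces to establishing the lower bound $g(E(k_1\#k_2))\ge t(k_1)+t(k_2)+2$.

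To set up the lower bound I would use the summing annulus. The decomposing $2$--sphere meets $k_1\#k_2$ in two points, so it restricts to an essential annulus $A$ in $E(k_1\#k_2)$, and cutting along $A$ expresses the exterior as $E(k_1)\cup_A E(k_2)$, glued along meridional annuli on the two boundary tori. Now take a minimal-genus Heegaard splitting $V\cup_S W$ of $E(k_1\#k_2)$ and untelescope it, in the sense of Scharlemann--Thompson, into a generalized Heegaard splitting whose thick surfaces are strongly irreducible and whose thin surfaces are incompressible. The plan is to isotope $A$ into good position with respect to this generalized splitting---so that $A$ meets each thick surface in curves essential on $A$, or is disjoint from it---using the standard strong-irreducibility exchange arguments together with the incompressibility of $A$.

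With $A$ in good position I would cut the generalized splitting along $A$, obtaining induced generalized Heegaard splittings of $E(k_1)$ and of $E(k_2)$, and then compare Euler characteristics. Each induced splitting of $E(k_i)$ has genus at least $t(k_i)+1$, and the gluing along the two meridional annuli is what should contribute the extra handle. Carrying out the additive bookkeeping, the genus of $S$ is at least $(t(k_1)+1)+(t(k_2)+1)$ unless a core curve of $A$ can be made to bound a disk on one side of $S$ while being primitive on the other; I would then show this degenerate configuration forces a meridian of some $k_i$ to be primitive with respect to the handlebody of a minimal Heegaard splitting of $E(k_i)$, i.e.\ forces $k_i$ to be meridionally primitive, contrary to hypothesis. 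Ruling out this one configuration yields $g(S)\ge t(k_1)+t(k_2)+2$, hence equality.

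The main obstacle is exactly this last step: controlling the interaction between the essential annulus $A$ and the (generalized) Heegaard surface when the factors are not small, so that the essential tori and annuli produced by the untelescoping can be handled. In the high-distance case this control is available and gives the result (Gao--Guo--Qiu \cite{GQ}), and for small knots it is Morimoto's theorem \cite{Mo2}; the difficulty is to show, in full generality, that the \emph{only} way the additive count can fail by one is through a meridionally primitive configuration. I expect this to be the crux, and the delicacy of verifying that no other genus-saving amalgamation across $A$ can occur is precisely where a genuine loophole, and hence the possibility of a counterexample, could hide.
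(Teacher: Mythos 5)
You cannot prove this statement, and the paper does not: it is Conjecture~\ref{conj1}, and the entire paper is devoted to \emph{refuting} it. Theorem~\ref{Tmain} produces, for each $n\ge 3$, a prime knot $k$ that is not meridionally primitive yet satisfies $t(k\# k')\le t(k)$ for every knot $k'$ of bridge number at most $n$; choosing $k'$ prime and not meridionally primitive then gives $t(k\# k')\le t(k)<t(k)+t(k')+1$ with neither factor meridionally primitive, so the reverse direction of the biconditional is false. Your forward direction is correct and standard (it is the stabilization observation behind Lemma~\ref{Lprim}), but the reverse direction, where you correctly locate all the content, admits no proof.

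To your credit, the gap sits exactly where you flagged it: your plan assumes that the only genus-saving configuration, once the summing annulus $A$ is in good position with respect to an untelescoped minimal splitting, is a core of $A$ bounding a disk on one side and being primitive on the other, forcing meridional primitivity. The paper exhibits a genuinely different mechanism. The knot $k$ is glued from two $n$-string tangles whose exteriors $M_1,M_2$ have Heegaard splittings of arbitrarily high distance, and Lemma~\ref{Lg} (via Lemmas~\ref{LSigma} and~\ref{Lgenus} computing $g(M_0)=2n$, $g(M_0^+)=2n+1$) gives $g(E(k))=g(M_1)+g(M_2)$ together with non-primitivity of the meridian (Corollary~\ref{Cg}). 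Then in Lemma~\ref{Lmain} an $n$-bridge sphere of $k'$ ($2n$-punctured, meridional boundary) and the $2n$-punctured decomposing sphere of $k$, joined by one tube through the gluing region, form a genus-$2n$ Heegaard surface of $E(k')\cup M_0$, which amalgamates with minimal splittings of $M_1,M_2$ to a Heegaard splitting of $E(k'\# k)$ of genus $g(M_1)+g(M_2)=g(E(k))$. The saving comes from matching the bridge structure of $k'$ against the tangle decomposition of $k$, absorbing all of $g(E(k'))$ at once, with no primitive meridian anywhere; the Heegaard surface meets $A$ in a pattern your dichotomy does not capture. Separately, your bookkeeping step (each induced splitting of $E(k_i)$ has genus at least $t(k_i)+1$ and the counts add across $A$) is already contradicted by Kobayashi's examples of unbounded degeneration \cite{K} and by Nogueira's prime examples \cite{No}; the Scharlemann--Schultens bound $t(k_1\# k_2)\ge \frac{1}{3}(t(k_1)+t(k_2))$ is the true general-position limit of such counting arguments. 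Your approach does succeed under extra hypotheses (small knots \cite{Mo2}, high distance on both factors \cite{GQ}), which is precisely why the counterexample here needs high distance inside the tangles rather than in the knot exteriors themselves.
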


The main result of this paper is the following:

\begin{theorem}\label{Tmain}
For any integer $n\geq 3$, there is a prime knot $k$ such that
\begin{enumerate}
  \item  $k$ is not meridionally primitive, and
  \item  for every $m$-bridge knot $k'$ with $m\leq n$, the tunnel numbers satisfy $t(k\# k')\le t(k)$, in other words, the Heegaard genera of the knot exteriors satisfy $g(E(k\# k'))\le g(E(k))$.
\end{enumerate}
\end{theorem}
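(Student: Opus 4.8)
The plan is to produce, for each $n\ge 3$, an explicit prime knot $k=k_n$ whose exterior carries a minimal genus $g=t(k)+1$ Heegaard splitting $E(k)=V\cup_S W$ (with $\partial_- W=\partial E(k)$) engineered to contain a \emph{bridge-absorbing pocket}: a ball $B\subset S^3$ meeting $k$ in a trivial tangle near the point where the connected sum will be formed, positioned so that the compression body $W$ carries $n$ ``spare'' spanning arcs along a meridian annulus $A\subset\partial E(k)$. I would build $k$ by a tangle-sum / Heegaard-diagram construction for which the genus $g$ of $E(k)$ is computable and for which $E(k)$ is irreducible and atoroidal, tuning the free parameters so that the pocket accommodates up to $n$ bridges. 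The point of the design is that the degeneration should be a \emph{local} phenomenon concentrated near $A$, so that it can coexist with global complexity of the splitting.

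The first substantive step (the upper bound, i.e.\ the degeneration) is to show $t(k\#k')\le t(k)$ for every $m$-bridge $k'$ with $m\le n$. I would place $k'$ in $m$-bridge position, so that $E(k')$ decomposes along a $2m$-punctured sphere into two genus-$m$ handlebodies, and then graft the bridge tangle of $k'$ into the pocket $B$ when forming $k\#k'$ along the meridian annulus $A$. Because $m\le n$, the $m$ bridges can be slid into the spare spanning arcs of $W$, and I would then exhibit \emph{explicitly} a Heegaard surface of $E(k\#k')$ of genus $g$: concretely, amalgamate the splitting $V\cup_S W$ with the bridge decomposition of $E(k')$ along $A$, using the spare arcs to cancel the $t(k')+1$ handles that naive amalgamation would contribute. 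This produces a tunnel system for $k\#k'$ of size $\le g-1=t(k)$, which is exactly $g(E(k\#k'))\le g(E(k))$.

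It remains to verify the two defining properties of $k$. Primeness should follow from the building blocks: I would arrange $E(k)$ to be irreducible with incompressible boundary and to contain no essential annulus of swallow--follow type, for instance by making $E(k)$ hyperbolic, or at least atoroidal and anannular, so that $k$ admits no connected-sum decomposition. The genuinely hard step is (1), that $k$ is \emph{not} meridionally primitive: I must rule out a primitive meridian disk--annulus pair in \emph{every} minimal Heegaard splitting of $E(k)$, not just the one I constructed. For this I would prove a lower bound on the Hempel distance (or a suitable rigidity statement) for the minimal splittings of $E(k)$, strong enough that an essential disk $D\subset V$ meeting the core of a spanning annulus in a single point on $S$ would force a low-distance or reducing configuration that the construction forbids.

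I expect the main obstacle to be the tension between steps (2) and (1): strong degeneration of the kind in part~(2) usually signals low complexity of $E(k)$, and low complexity is exactly what tends to \emph{produce} a primitive meridian. The technical heart of the argument is therefore to show that the bridge-absorbing pocket is genuinely not a meridional-primitivity structure and, more importantly, that no \emph{other} minimal splitting of $E(k)$ is primitive either. Controlling all minimal Heegaard splittings of $E(k)$ simultaneously -- presumably through a careful analysis of their disk sets and distances, in the spirit of Scharlemann--Tomova and of the Gao--Guo--Qiu distance estimate quoted above -- is where I expect the real work, and the most delicate case analysis, to lie.
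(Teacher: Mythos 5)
Your outline for part (2) is in the right spirit---the paper also absorbs the bridges of $k'$ into structure of $E(k)$ concentrated near the meridian---but the concrete mechanism is different and your version is not yet an argument. In the paper, $k$ is the union of two $n$-string tangles, each of whose exteriors carries a high-distance Heegaard splitting, glued along a sphere $S_0$; the ``pocket'' is not a trivial-tangle ball but the collar $M_0$ of $S_0\cup k$, whose Heegaard genus is computed to be exactly $2n$ (Lemma~\ref{Lgenus}). An $n$-bridge sphere for $k'$ (which exists because $m\le n$) gives a $2n$-punctured sphere $Q_1\subset E(k')$, the sphere $S_0$ gives a $2n$-punctured sphere $Q_2\subset E(k)$, and $Q_1$ together with $Q_2$ tubed once along $\partial E(k)$ forms a genus-$2n$ Heegaard surface of $E(k')\cup M_0$; amalgamating with minimal splittings of the two tangle exteriors yields $g(E(k\# k'))\le g(E(k))$ (Lemma~\ref{Lmain}). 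Your ``spare spanning arcs of $W$ cancelling the $t(k')+1$ handles'' would have to be made precise along some such lines.

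The genuine gap is your plan for part (1). You propose to rule out meridional primitivity by proving a lower bound on the Hempel distance (or a rigidity statement) for \emph{all} minimal Heegaard splittings of $E(k)$. This cannot work, and the tension you flag at the end is exactly why: any knot of this type contains closed essential surfaces $F_1,F_2$ of genus $n$ coming from the tangle decomposition, and the paper shows (Lemma~\ref{Lg}, via \cite[Theorem 2]{QWZ}, using high distance of the \emph{tangle exteriors} $M_1,M_2$, not of splittings of $E(k)$) that every minimal genus Heegaard splitting of $E(k)$ is an amalgamation of splittings of $M_1$, $M_0$, $M_2$ along $F_1$ and $F_2$; amalgamated splittings are weakly reducible, so every minimal splitting of $E(k)$ has distance at most $1$. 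Moreover, granting part (2), high distance for minimal splittings of $E(k)$ is independently untenable: by the Gao--Guo--Qiu theorem \cite{GQ} quoted in the introduction, it would force $t(k\# k')=t(k)+t(k')+1$ for any $k'$ of bridge number at most $n$ whose exterior has a distance-$\ge 3$ minimal splitting. The idea you are missing is the paper's indirect criterion (Lemma~\ref{Lprim} and Corollary~\ref{Cprim}): if $k$ were $\mu$-primitive, then the annulus sum $E^+=E(k)\cup_A(T^2\times I)$ along a meridional annulus would satisfy $g(E^+)\le g(E(k))$. Hence non-primitivity follows from the single genus computation $g(E^+)=g(E(k))+1$, which the paper carries out by reducing (again via \cite{QWZ}) to showing $g(M_0^+)=g(M_0)+1=2n+1$, using Schultens' classification \cite{Sch} of Heegaard splittings of $(\text{compact surface})\times S^1$ together with a homological argument (Lemmas~\ref{LSigma} and~\ref{Lgenus}). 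This converts the quantifier ``every minimal splitting of $E(k)$''---which you correctly identify as the hard point---into a computation that never touches those splittings directly, and in particular requires no distance control on $E(k)$ itself.
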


By choosing the knot $k'$ in Theorem~\ref{Tmain} to be prime and not meridionally primitive, we have a counterexample to Conjecture~\ref{conj1}. Furthermore, we have the following immediate corollaries.

\begin{corollary}\label{CMori} For any collection of knots $k_{1},\ldots, k_{m}$, there is a prime knot $k$ such that $k$ is not meridionally primitive and $t(k_{i}\# k)\le t(k)$ for all $i$.
\end{corollary}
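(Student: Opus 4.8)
The plan is to deduce Corollary~\ref{CMori} directly from Theorem~\ref{Tmain} by selecting a single integer $n$ that serves the entire finite collection $k_1,\ldots,k_m$ at once. The only ingredient beyond the theorem is the elementary observation that every knot in $S^3$ has a finite bridge number, so that a finite family admits a uniform upper bound.

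I would recall that each $k_i$ is a tame knot and hence has a well-defined bridge number $b(k_i)<\infty$. Since the family is finite, I would set
$$n=\max\{3,\,b(k_1),\ldots,b(k_m)\},$$
so that $n\ge 3$ and $b(k_i)\le n$ for every $i$; taking the maximum with $3$ is needed only to satisfy the hypothesis $n\ge 3$ of Theorem~\ref{Tmain}. Applying Theorem~\ref{Tmain} to this $n$ then produces a prime knot $k$ that is not meridionally primitive and that satisfies $t(k\# k')\le t(k)$ for every knot $k'$ of bridge number at most $n$. Because $b(k_i)\le n$ for each $i$, every $k_i$ is an admissible choice of $k'$, and the commutativity of connected sum gives $t(k_i\# k)=t(k\# k_i)\le t(k)$, which is exactly the assertion of the corollary.

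I do not anticipate any genuine obstacle: the corollary is immediate once one notes the uniform bridge-number bound. The two points deserving a moment's care are the $n\ge 3$ constraint, absorbed into the maximum above, and the clash of the symbol $m$, which denotes the bridge parameter in the statement of Theorem~\ref{Tmain} but indexes the family in the corollary; in the argument above $m$ is the size of the collection and the bridge parameter is written $b(k_i)$.
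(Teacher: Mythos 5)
Your proposal is correct and is exactly the argument the paper intends: the paper states this corollary as an immediate consequence of Theorem~\ref{Tmain}, and the deduction is precisely your observation that a finite collection of knots has a uniform bridge-number bound, so one applies the theorem with $n=\max\{3,\,b(k_1),\ldots,b(k_m)\}$. Your handling of the $n\ge 3$ constraint and of the notational clash with the index $m$ is also sound, so there is nothing to add.
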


\begin{corollary}\label{CMori} For any integer $m$,
there exist knots $k_1$ and $k_2$ that are prime and not meridionally primitive, such that $t(k_1\# k_2)\le t(k_1)+t(k_2)-m$.
\end{corollary}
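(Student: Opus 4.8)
The plan is to deduce the statement directly from Theorem~\ref{Tmain} by taking $k_1$ to be the knot produced by the theorem and $k_2$ to be a suitably chosen prime, non-meridionally-primitive knot of large tunnel number, with the bridge bound $n$ in the theorem chosen to accommodate $k_2$. We may assume $m\ge 1$, since for $m\le 0$ the desired inequality is weaker than the strict subadditivity $t(k_1\# k_2)\le t(k_1)+t(k_2)$ already furnished by the counterexample to Conjecture~\ref{conj1} (take any prime, non-meridionally-primitive $k'$ in Theorem~\ref{Tmain} with $n=3$).

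First I would fix a prime knot $k_2$ that is not meridionally primitive and satisfies $t(k_2)\ge m$; the existence of such knots is the only genuine input and is discussed below. Recall the elementary inequality $t(k_2)\le b(k_2)-1$, where $b(k_2)$ denotes the bridge number, obtained by joining consecutive bridges with tunnels; hence $b(k_2)\ge m+1\ge 3$. Set $n=\max\{3,\,b(k_2)\}=b(k_2)$, so that $n\ge 3$ and $b(k_2)\le n$. Applying Theorem~\ref{Tmain} to this $n$ yields a prime knot $k_1:=k$ that is not meridionally primitive and for which $t(k_1\# k')\le t(k_1)$ holds for every knot $k'$ whose bridge number is at most $n$. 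Since $k_2$ has bridge number $b(k_2)\le n$, the second conclusion of Theorem~\ref{Tmain} applies to $k'=k_2$ and gives
\[
t(k_1\# k_2)\le t(k_1)=t(k_1)+t(k_2)-t(k_2)\le t(k_1)+t(k_2)-m,
\]
where the last step uses $t(k_2)\ge m$. As both $k_1$ and $k_2$ are prime and not meridionally primitive, this is exactly the assertion.

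The main point, and the step I expect to require the most care, is the existence of a prime, non-meridionally-primitive knot $k_2$ with $t(k_2)\ge m$. I would produce such a knot as one whose exterior $E(k_2)$ admits a minimal-genus Heegaard splitting of genus $m+1$ and of sufficiently high Hempel distance: such splittings exist by the standard construction of high-distance splittings (precomposing the gluing by a high power of a pseudo-Anosov map of the splitting surface), and by Scharlemann--Tomova a splitting whose distance exceeds twice its genus is the \emph{unique} minimal-genus splitting, so that $t(k_2)=m$. High distance forces $E(k_2)$ to contain no essential annulus, whence $k_2$ is prime. Moreover, meridional primitivity would require a minimal-genus splitting carrying an essential disk in one handlebody that meets, in a single point, the boundary of a spanning annulus of the other compression body; such a configuration bounds the distance of that splitting by a universal constant. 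Since our high-distance splitting is the unique minimal-genus one and its distance exceeds that bound, $k_2$ cannot be meridionally primitive. Alternatively, one may invoke \cite{GQ}: a knot whose exterior has a high-distance minimal splitting satisfies $t(k_2\#k')=t(k_2)+t(k')+1$ against any distance-$\ge 3$ factor $k'$, which is incompatible with $k_2$ being meridionally primitive.
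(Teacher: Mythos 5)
Your proposal is correct and takes essentially the same route as the paper, which states this corollary as an immediate consequence of Theorem~\ref{Tmain}: exactly as you do, one feeds a prime, non-$\mu$-primitive knot $k_2$ with $t(k_2)\ge m$ into the theorem, with $n$ chosen at least as large as the bridge number of $k_2$, and concludes $t(k_1\# k_2)\le t(k_1)\le t(k_1)+t(k_2)-m$. The one ingredient the paper leaves implicit---the existence of prime, non-$\mu$-primitive knots of arbitrarily large tunnel number---you justify via high-distance Heegaard splittings, which is sound and in the same spirit as the paper; note that the knots $k$ constructed in section~\ref{Sc} themselves (for varying $n$) would also serve, since by Lemma~\ref{Lprime}, Corollary~\ref{Cg} and Lemma~\ref{Lg} they are prime, not $\mu$-primitive, and have tunnel number at least $2n+1$.
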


In the proof of the main theorem, we only give an upper bound on $t(k\# k')$. However, it is conceivable that the tunnel number of the connected sum should be at least as large as the tunnel number of any of its factors.  This is an interesting question in its own right.

\begin{question}\label{Q}
Are there two knots $k$ and $k'$ in $S^3$ with $t(k\# k')<t(k)$?
\end{question}

If there is a degree-one map $f\colon M\to N$ between two closed orientable 3--manifolds $M$ and $N$, a difficult question in 3--manifold topology is whether or not $g(M)\ge g(N)$ always holds.  Note that there is a degree-one map from $E(k\# k')$ to $E(k)$.  Thus a positive answer to Question~\ref{Q} also gives an example of 3--manifolds $M$ and $N$ such that there exists a degree-one map $f\colon M\to N$ but $g(M)<g(N)$.

\begin{acknowledgments}
The first author would like to thank Yoav Moriah for many helpful conversations.
\end{acknowledgments}

\section{Annulus sum and meridionally primitive knots}\label{SA}

\begin{notation}
Throughout this paper, for any subspace $X$ in a manifold, $N(X)$ denotes an open regular neighborhood of $X$, $\Int(X)$ denotes the interior of $X$ and $\overline{X}$ denotes the closure of $X$.  For any knot $k$ in $S^3$, we use $E(k)$ to denote the knot exterior $S^3-N(k)$.  For any compact 3--manifold $X$, we use $g(X)$ to denote its Heegaard genus.
\end{notation}

\begin{definition} Let $E(k)$ be a knot exterior and
 $E(k)=V\cup_{S} W$ a Heegaard splitting of $E(k)$ with $\partial_{-} W=\partial E(k)$.  So $V$ is a handlebody.  Let $r$ be a slope in the boundary torus $\partial E(k)$. If there are an essential disk $B$ in $V$ and a properly embedded annulus $A$ in $W$ with $\partial A=\alpha\cup\beta$ such that
\begin{enumerate}
  \item $\alpha\subset\partial E(k)$ is a curve of slope $r$, and
  \item $\beta\subset\partial_+W$ and $\beta\cap\partial B$ is a single point,
\end{enumerate}
then we say that the Heegaard splitting is $r$-primitive.  Note that, since $\beta\cap\partial B$ is a single point, $\beta$ is a primitive curve in the handlebody $V$.
\end{definition}

\begin{definition} Let $k$ be a knot in $S^{3}$ and $E(k)$ the exterior of $k$. We denote by $\mu$ the meridional slope in $\partial E(k)$.  We say that $k$ is meridionally primitive or $\mu$-primitive if $E(k)$ has a minimal genus Heegaard splitting that is $\mu$-primitive.  Note that this is equivalent to saying that $k$ admits a $(t(k),1)$ position, see \cite{Mo2, KR}.
\end{definition}

Given two 3--manifolds $M$ and $N$ with nonempty boundary, an annulus sum of $M$ and $N$ is a 3--manifold obtained by identifying an annulus $A_M\subset \partial M$ and an annulus $A_N\subset\partial N$.  Note that, for any two knots $k_1$ and $k_2$ in $S^3$, $E(k_1\# k_2)$ is an annulus sum of $E(k_1)$ and $E(k_2)$, identifying a pair of meridional annuli in $\partial E(k_1)$ and $\partial E(k_2)$ (in this paper, an annulus is meridional if its core curve has meridional slope in the boundary torus).

Let $X=M\cup_A N$ be an annulus sum of two compact 3--manifolds $M$ and $N$.  One can obtain a Heegaard surface of $X$ by connecting Heegaard surfaces of $M$ and $N$ using a tube through the gluing annulus $A$.  Thus $g(X)\le g(M)+g(N)$.
Suppose $M=E(k)$ and a minimal Heegaard splitting of $M$ is $r$-primitive, where $r$ is the slope of the core curve of the annulus $A$.  Then it is easy to see that the Heegaard splitting of $X=M\cup_A N$ constructed this way (using the $r$-primitive Heegaard splitting of $M$) is stabilized and hence $g(X)< g(M)+g(N)$.
Thus a meridionally primitive Heegaard splitting of $E(k)$ provides a clear picture how the Heegaard genus of the connected sum of two knots can degenerate.  We summarize this well-known fact as the following lemma, see \cite{Mo2} for a proof.

\begin{lemma}\label{Lprim}
Let $k$ be a nontrivial knot in $S^3$ and $N$ any compact irreducible 3--manifold with boundary. Let $A$ be an annulus in $\partial N$ and let $M=E(k)\cup_A N$ be an annulus sum identifying $A$ to a meridional annulus in $\partial E(k)$.  If $k$ is $\mu$-primitive, then $g(M)<g(E(k))+g(N)$.
\end{lemma}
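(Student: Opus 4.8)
The plan is to build the standard ``tubed'' Heegaard surface of the annulus sum $M$ out of a $\mu$-primitive splitting of $E(k)$ together with an arbitrary minimal splitting of $N$, and then to observe that this particular splitting is stabilized, so that its genus can be lowered by one.

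First I would fix the data. Since $k$ is $\mu$-primitive, choose a minimal genus Heegaard splitting $E(k)=V\cup_S W$ with $\partial_-W=\partial E(k)$ that is $\mu$-primitive; this provides an essential disk $B\subset V$ and a properly embedded spanning annulus $A'\subset W$ with $\partial A'=\alpha\cup\beta$, where $\alpha\subset\partial E(k)$ has meridional slope, $\beta\subset\partial_+W=S$, and $|\partial B\cap\beta|=1$. Also fix a minimal genus splitting $N=V_N\cup_{S_N}W_N$ with $\partial N\subset\partial_-W_N$. I would then arrange the gluing annulus $A$ so that its core is exactly the meridian $\alpha$, that is, $A$ is a neighborhood of $\alpha$ in $\partial E(k)$, identified with $A\subset\partial_-W_N$. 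With these choices $A\subset\partial_-W$ and $A\subset\partial_-W_N$, so a connecting tube between $S$ and $S_N$ can be routed through the two compression bodies and across $A$.

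Next I would form the amalgamated Heegaard surface $S'$ by tubing $S$ to $S_N$ along an arc passing through $A$. This yields a splitting $M=V'\cup_{S'}W'$ in which $V'$ is the boundary connected sum $V\natural V_N$ along the tube and $g(S')=g(S)+g(S_N)=g(E(k))+g(N)$, already giving $g(M)\le g(E(k))+g(N)$. The heart of the matter is to upgrade this to a strict inequality by showing that $V'\cup_{S'}W'$ is stabilized. Here I would route the $E(k)$-side of the connecting tube along the spanning annulus $A'$ so that its foot on $S$ lies on $\beta$. Since $\beta$ is primitive in $V$, the disk $B\subset V\subset V'$ meets $\beta$, and hence the connecting tube, in a single point; sliding the tube over the handle of $V$ dual to $B$ then absorbs the connecting handle into the primitive handle, exhibiting a destabilizing pair of disks. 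Equivalently, the $\mu$-primitive handle and the amalgamating tube coincide, so the amalgamation costs one less than the naive count. This is precisely the remark recorded just before the statement, and it shows $S'$ is stabilized.

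Destabilizing then produces a Heegaard surface of genus $g(S')-1=g(E(k))+g(N)-1$, whence $g(M)\le g(E(k))+g(N)-1<g(E(k))+g(N)$, as desired. The main obstacle is the stabilization step: one must verify carefully that the spanning annulus $A'$ genuinely lets the connecting tube be isotoped onto the primitive curve $\beta$ and then cancelled against $B$, keeping $B$ essential in the enlarged handlebody $V'$ and producing complementary disks meeting in exactly one point. This is the content of the well-known computation, for which I would follow \cite{Mo2}; the irreducibility of $N$ is used to ensure that no extraneous reducing spheres interfere and that the reduced surface is still a Heegaard surface of $M$.
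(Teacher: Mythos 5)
Your proposal is correct and takes essentially the same route as the paper: the paper likewise forms the tubed Heegaard surface from a $\mu$-primitive minimal splitting of $E(k)$ and a minimal splitting of $N$, observes that primitivity (the disk $B$, the spanning annulus $A'$, and $|\beta\cap\partial B|=1$) makes this splitting stabilized, and defers the detailed cancellation computation to \cite{Mo2}. One minor point: you need only the boundary component of $N$ containing $A$ (not all of $\partial N$) to lie in $\partial_-W_N$, which is automatic after renaming the two sides of the splitting of $N$.
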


The following is an immediate corollary of Lemma~\ref{Lprim}, and we will use it later to prove that a knot is not $\mu$-primitive.

\begin{corollary}\label{Cprim}
Let $M$ and $N$ be as in Lemma~\ref{Lprim}.  Suppose $N=T^2\times I$.  If $g(M)=g(E(k))+1$, then $k$ is not $\mu$-primitive.
\end{corollary}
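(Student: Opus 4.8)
The plan is to prove the contrapositive and reduce everything to a single computation: the Heegaard genus of $T^2\times I$. Once that value is known, the statement is an immediate application of Lemma~\ref{Lprim}.

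First I would record that, under the definition of Heegaard splitting used in this paper (where the splitting surface $S$ divides the manifold into two compression bodies $V$ and $W$ and the boundary components may be distributed between $\partial_-V$ and $\partial_-W$), one has $g(T^2\times I)=1$. Indeed, the level torus $T^2\times\{1/2\}$ is a Heegaard surface: it cuts $T^2\times I$ into the two product pieces $T^2\times[0,1/2]$ and $T^2\times[1/2,1]$, each of which is a trivial compression body whose $\partial_+$ is the middle torus and whose $\partial_-$ is the corresponding boundary torus. Since $T^2\times I$ has nonempty torus boundary, its Heegaard genus is at least $1$, so $g(T^2\times I)=1$. I would flag explicitly that this is \emph{not} the ``all boundary on one side'' invariant $2\cdot 1+1=3$; the point is precisely that here the two boundary tori are permitted to lie on opposite sides of $S$, which is exactly the convention fixed in Section~\ref{Sintro}.

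Next, suppose toward a contradiction that $k$ is $\mu$-primitive. The manifold $N=T^2\times I$ is compact, irreducible, and has boundary, and $M=E(k)\cup_A N$ is the annulus sum identifying $A$ to a meridional annulus of $\partial E(k)$; thus $M$ and $N$ satisfy the hypotheses of Lemma~\ref{Lprim}. Applying that lemma gives $g(M)<g(E(k))+g(N)=g(E(k))+1$, hence $g(M)\le g(E(k))$. This contradicts the hypothesis $g(M)=g(E(k))+1$, so $k$ cannot be $\mu$-primitive.

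The argument is genuinely immediate once $g(T^2\times I)=1$ is established, so I expect the only real obstacle to be a bookkeeping one rather than a geometric one: verifying that the Heegaard genus appearing in the conclusion of Lemma~\ref{Lprim} is the invariant that admits the product splitting of $T^2\times I$, and not the larger invariant obtained by forcing both boundary tori onto a single compression body. Keeping the conventions aligned is what makes the bound $g(N)=1$ legitimate, and with it the corollary follows directly from the stated lemma.
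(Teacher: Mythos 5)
Your proof is correct and is exactly the argument the paper intends: the corollary is stated there as an immediate consequence of Lemma~\ref{Lprim}, the point being that $g(T^2\times I)=1$ (via the product splitting $T^2\times\{1/2\}$, which is legitimate under the paper's compression-body convention), so $\mu$-primitivity would give $g(M)<g(E(k))+1$, i.e.\ $g(M)\le g(E(k))$, contradicting the hypothesis. One inessential slip in your side remark: the ``all boundary on one side'' genus of $T^2\times I$ is $2$ (tube the two boundary tori along a vertical arc), not $2\cdot 1+1=3$, but nothing in your argument depends on this.
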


Before we proceed, we would like to clarify some terminology that we will use in constructing surfaces:
Given a surface $S$ and an embedded arc $\gamma$ with $\partial\gamma=\gamma\cap S$, we can construct a new surface $S'$ by first removing two disk neighborhoods of the two endpoints of $\gamma$ in $S$ and then connecting the resulting boundary circles by an annulus (or tube) along $\gamma$.  We say that the surface $S'$ is obtained by adding a tube to $S$ along the arc $\gamma$.

\section{The construction}\label{Sc}

Let $n\ge 3$ be any integer.
The knot $k$ in Theorem~\ref{Tmain} is constructed by gluing together two $n$-string tangles.  We require that each tangle exterior has a high-distance Heegaard splitting.  The following is a construction of such tangles.

Let $n$ be the integer above.  We fix a large number $d>0$ ($d$ is assumed to be sufficiently large relative to $n$). By \cite{MMS}, for any number $d$, there is a knot $K$ in $S^3$ with tunnel number $t\ge n$ such that $E(K)$ has a Heegaard splitting of distance at least $d$.  Let $\tau_1,\dots, \tau_t$ be the unknotting tunnels of $K$. We may assume that $\tau_1,\dots, \tau_t$ are disjoint and view $G=(\cup_{i=1}^t\tau_i)\cup K$ as a trivalent graph in $S^3$.  Since $t\ge n$, we may find a tree $Y$ in the graph $G$ that contains exactly $n-1$ tunnels and is disjoint from the remaining $t-n+1$ tunnels.  Let $N(Y)$ be a small open neighborhood of $Y$. We require that each tunnel $\tau_i$ is either totally in $Y$ or totally outside $N(Y)$.  Since $Y$ is a tree, $N(Y)$ is an open 3--ball.  Let $B=S^3-N(Y)$.  So $B$ is a 3--ball and $K\cap B$ is an $n$-string tangle in $B$.

Let $N(G)$ be a neighborhood of $G$ in $S^3$ that contains $N(Y)$. Let $H=S^3-N(G)$ and $S=\partial H$.  By our construction of $G$, $H$ is a handlebody and $S$ is a Heegaard surface of $E(K)$.  Moreover, the distance of the Heegaard splitting along $S$ is at least $d$, where $d$ is a number that can be assumed to be arbitrarily large. Since $N(Y)\subset N(G)$, $H\subset B$. Let $M_B=B-N(K\cap B)$ be the tangle exterior, where $N(K\cap B)$ is a small open neighborhood of $K\cap B$ in $B-H$.  By our construction, $M_B-\Int(H)$ can be obtained from the compression body $E(K)-\Int(H)$ by removing a small neighborhood of part of its spine.  Thus $M_B-\Int(H)$ is also a compression body and $S$ is a Heegaard surface of the tangle exterior $M_B$.  Moreover, the disk complex of the compression body $M_B-\Int(H)$ is a nontrivial subcomplex of the disk complex of the compression body $E(K)-\Int(H)$.  Thus the distance of the Heegaard splitting of $M_B$ along $S$ is at least $d$.  In particular, by assuming $d\ge 2$, we may assume that the Heegaard splitting of $M_B$ (along $S$) is strongly irreducible, and hence $\partial M_B$ is incompressible in $M_B$ \cite{CG}.

We take two copies of $B$, denoted by $B_1$ and $B_2$.  Then we glue $B_1$ to $B_2$ along the boundary sphere in such a way that the union of the two tangles is a knot.  This is our knot $k$.  We will show next that $k$ is a prime knot and it is not $\mu$-primitive.  These follow from that the tangle exterior has a high-distance Heegaard splitting.

Let $S_0=\partial B_1=\partial B_2$ be the 2--sphere, $\Gamma=k\cup S_0$, and $M_i=B_i-N(\Gamma)$ ($i=1,2$). So $M_i$ is basically the exterior of the tangle in $B_i$.  Let $F_i=\partial M_i$ and let $M_0=\overline{N(\Gamma)}-N(k)$, where $N(k)$ is a small tubular neighborhood of $k$ in $N(\Gamma)$.  Thus $F_i=M_0\cap M_i$, and the knot exterior $E(k)$ can be obtained by gluing $M_1$ and $M_2$ to $M_0$ along $F_1$ and $F_2$ respectively. So we may view $E(k)=M_1\cup M_0\cup M_2$ and view $M_0$, $M_1$ and $M_2$ as submanifolds of $E(k)$.  By our earlier conclusion, $F_i$ is incompressible in $M_i$.  Moreover, each tangle exterior $M_i$ has a Heegaard surface $S_i$ such that the distance of the Heegaard splitting of $M_i$ along $S_i$ is at least $d$, where $d$ is a number that we can arbitrarily choose.

Next we consider $M_0$.  Note that $\partial M_0$ has 3 components: $F_1$, $F_2$ and the torus $T=\partial E(k)$. By our construction, $M_0$ can be obtained by gluing a product $\Sigma\times I$ to a product $T\times I$, where $I=[0,1]$, $\Sigma$ is a $(2n)$-hole sphere and $T$ is a torus, and the gluing map identifies each annulus in $(\partial\Sigma)\times I$ to an annulus in $T\times\{1\}$.  Thus $T\times\{0\}=T=\partial E(k)$, and each $F_i$ can be obtained from a component of $\Sigma\times\partial I$ by adding tubes/annuli (from $T\times\{0\}$) connecting its boundary circles.  We view $T\times I$ and $\Sigma\times I$ as submanifolds of $M_0$.

Let $A_1,\dots, A_{2n}$ be the $2n$ components of $(\partial\Sigma)\times I$.  So $\cup_{i=1}^{2n}A_i$  divides $M_0$ into two submanifolds $T\times I$ and $\Sigma\times I$.  It is clear from our construction that each $A_i$ is an incompressible and $\partial$-incompressible annulus properly embedded in $M_0$.  This also implies that $F_1$ and $F_2$ are incompressible in $M_0$.  Thus $F_1$ and $F_2$ are incompressible in $E(k)$.

\begin{lemma}\label{Lprime}
If $d>2$, the knot $k$ is prime.
\end{lemma}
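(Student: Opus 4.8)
The plan is to show that a connected-sum sphere for $k$ would have to be isotopic to the decomposing sphere $S_0$, and that $S_0$ itself gives a trivial (prime) decomposition because each tangle is ``knotted enough'' to be prime. The standard approach to primality is: suppose $k = k_1 \# k_2$ with $k_1, k_2$ both nontrivial. Then there is an essential 2--sphere $P$ in $E(k)$ meeting $\partial E(k)$ in two meridional circles (a swallow-follow or decomposing annulus/sphere), equivalently an essential annulus or sphere separating $E(k)$ into two nontrivial knot exteriors joined along a meridional annulus. The goal is to derive a contradiction from the high distance of the Heegaard splittings of the tangle exteriors $M_1$ and $M_2$.

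\emph{First} I would recall the structure $E(k) = M_1 \cup M_0 \cup M_2$, where $F_1, F_2$ are incompressible in $E(k)$ (established in the text above) and each $M_i$ carries a Heegaard splitting of distance at least $d$. The key external input is the well-known principle (Scharlemann--Tomova type, or the distance-vs-incompressible-surface bound) that in a manifold with a Heegaard splitting of distance at least $d$, any essential surface of sufficiently small genus/boundary-complexity must have its geometric intersection with the splitting controlled; in particular, for $d$ large, an incompressible and $\partial$-incompressible surface cannot have too-small complexity, and essential annuli/spheres are highly restricted. I would invoke this to argue that any essential meridional sphere or annulus realizing a connected-sum decomposition can be isotoped to lie in the ``product region'' $M_0$, away from the high-distance pieces $M_1, M_2$.

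\emph{Next}, I would analyze $M_0 = (\Sigma \times I) \cup (T \times I)$, which is a product-like piece whose essential annuli are all vertical (isotopic to the $A_i$ or to annuli in $\Sigma \times I$, $T \times I$). A decomposing sphere confined to $M_0$, after the incompressibility of the $F_i$ and the $A_i$ is used, can be understood combinatorially: it must be built from these vertical annuli and the $2n$-holed sphere $\Sigma$. I would show such a sphere is either inessential or is parallel to the standard decomposing sphere $S_0$, which exhibits $k$ as the ``tangle sum'' rather than as a nontrivial connected sum. Because each tangle exterior $M_i$ has incompressible boundary and no essential disk crossing $S_0$ in a way that splits off a summand, the only decomposition is the trivial one, so $k$ is prime.

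\emph{The hard part} will be the first reduction: controlling how an essential sphere or annulus for a hypothetical connected-sum decomposition meets the two high-distance tangle exteriors, and showing it can be pushed into $M_0$. This is where the hypothesis that $d$ is large (the text only needs $d > 2$ here, but the real leverage is $d \gg n$) does the work, via the fact that high Heegaard distance forces incompressibility and bounds on essential surfaces. I would expect to cite the relevant distance estimate (e.g.\ Hartshorn's theorem that an incompressible surface bounds the distance, or a Scharlemann--Tomova bound) to rule out essential meridional spheres intersecting $M_1$ or $M_2$ essentially, and then the combinatorial analysis of the product region $M_0$ finishes the argument. The $\partial$-incompressibility and incompressibility of the annuli $A_i$, already recorded above, are the main tools for that final combinatorial step.
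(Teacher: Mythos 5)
Your first two steps track the paper's actual proof: a nontrivial connected-sum decomposition of $k$ gives an essential annulus $Q$ in $E(k)$ with meridional boundary slope; incompressibility of $F_1,F_2$ lets one isotope $Q$ so that $Q\cap M_i$ is a union of essential annuli in $M_i$ (the pieces are automatically annuli, since $Q$ itself is an annulus and $\partial Q\subset\partial E(k)$ lies in the $M_0$ part of the boundary); and \cite[Theorem 3.1]{S} says a manifold carrying a Heegaard splitting of distance $>2$ contains no essential annulus, so $Q\cap M_i=\emptyset$ and $Q\subset M_0$. Note that this reduction is not where any leverage of the form $d\gg n$ is needed: an essential annulus has Euler characteristic $0$ and forces distance at most $2$, so $d>2$ is exactly the hypothesis used. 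The step you flagged as ``the hard part'' is in fact the routine part, and no Hartshorn/Scharlemann--Tomova genus bound beyond this is required.

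The genuine gap is in your endgame inside $M_0$. You propose to show that the decomposing surface is ``either inessential or parallel to the standard decomposing sphere $S_0$,'' and then to argue that the $S_0$-decomposition is trivial. This cannot work: a connected-sum sphere meets $k$ in exactly two points, so in $E(k)$ it is an annulus, whereas $S_0$ meets $k$ in $2n\ge 6$ points and corresponds to a $2n$-holed sphere in $E(k)$; no isotopy can carry one to the other, so ``parallel to $S_0$'' is not a possible outcome of the analysis. Moreover, even granting it, ``the only decomposition is the trivial one, so $k$ is prime'' is an assertion, not an argument. What the paper does instead is cleaner: since $Q\subset M_0$ and the $A_i$ are essential annuli, each component of $Q\cap(\Sigma\times I)$ may be taken to be an incompressible annulus in $\Sigma\times I$; by the classification of incompressible surfaces in $\Sigma\times I$ (your ``vertical annuli'' remark), each such component is either $\partial$-parallel into some $A_i$ or horizontal of the form $\Sigma\times\{x\}$ --- and the horizontal case is impossible here because $\Sigma\times\{x\}$ is a $2n$-holed sphere, not an annulus (this is where $n>1$ enters). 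Hence all components are $\partial$-parallel, and $Q$ can be isotoped entirely into $T\times I$ with $\partial Q\subset T\times\{0\}$; but $T^2\times I$ contains no essential annulus with both boundary circles on $T\times\{0\}$, a contradiction. The conclusion is that the essential meridional annulus does not exist at all, not that it is isotopic to some standard position.
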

\begin{proof}
Suppose $k$ is not prime. Then $E(k)$ contains an essential annulus $Q$ with meridional $\partial$-slope.  Since $F_1$ and $F_2$ are incompressible in $E(k)$, after isotopy, we may assume that if $Q\cap F_i\ne\emptyset$, $Q\cap M_i$ is a collection of essential annuli in $M_i$. Since the Heegaard distance of the splitting in $M_i$ is at least $d>2$, by \cite[Theorem 3.1]{S}, $M_i$ contains no essential annulus.  Thus $Q\cap F_i=\emptyset$ for both $i$ and hence $Q\subset M_0$.  The following claim is well known. \vskip 3mm

\begin{claim} Let $F$ be an incompressible surface in $\Sigma\times I$ with $\partial F\subset(\partial\Sigma)\times I$, then either $F$ is $\partial$-parallel to a subannulus of a component $A_i$ of $(\partial\Sigma)\times I$, or $F$ is in the form $\Sigma\times \bigl\{x\bigr\}$, where $x\in I$.
\end{claim}

Let $(\partial\Sigma)\times I=\cup_{i=1}^{2n} A_{i}$ be as above. Since each $A_i$ is an essential annulus in $M_0$, after isotopy, each component of $Q\cap(\Sigma\times I)$ is an incompressible annulus in $\Sigma\times I$. Since $n>1$, by the claim, each component of $Q\cap(\Sigma\times I)$ must be an annulus $\partial$-parallel to a subannulus of $A_{i}$ for some $i$.   This means that after isotopy, $Q\subset T\times I$ with $\partial Q\subset T\times\{0\}$.  Clearly $T\times I$ contains no such essential annulus, a contradiction.
\end{proof}

Next we give two constructions of a genus-$(2n)$ Heegaard surface for $M_0$.  These constructions imply that the Heegaard genus $g(M_0)\le 2n$.  The first construction gives a Heegaard surface of $M_0$ that separates $F_1$ from $F_2$.  In the second construction, $F_1$ and $F_2$ lie on the same side of the Heegaard surface.

Construction 1: We start with a peripheral surface $F_1'$ in $M_0$ parallel to $F_1$, which has genus $n$.  We describe a neighborhood of $T\times I$ in $M_0$ using a (2-dimensional) schematic picture Figure~\ref{FHeeg}(a).  Note that $X$ in Figure~\ref{FHeeg}(a) is a picture of an annulus and $X\times S^1$ is our neighborhood of $T\times I$ in $M_0$, Figure~\ref{FHeeg} is a picture with $n=3$ and the shaded regions in Figure~\ref{FHeeg} denote parts of $\Sigma\times I$ that are neighborhoods of the annuli $A_i$'s.  Now we add $n$ tubes to $F_1'$ along $n$ unknotted arcs which are parallel to arcs in $T\times\{1\}$ and circularly connect the $n$ annuli of $F_1'\cap (T\times I)$, see the dashed arcs in Figure~\ref{FHeeg}(b) for a picture of these $n$ arcs (this is slightly misleading since all other arcs in Figure~\ref{FHeeg} denote surfaces but the dashed arcs denote arcs connecting the surfaces).  Since $g(F_1)=n$, the resulting surface $S_F$ has genus $2n$.  Moreover, if one maximally compresses $S_F$ outside the added tubes, one ends up with a surface parallel to $F_2$ and a torus parallel to $T\times\{0\}$.  Thus $S_F$ is a Heegaard surface for $M_0$ of genus $2n$.

Construction 2:  Let $F_i'$ ($i=1,2$) be a peripheral surface in $M_0$ parallel to $F_i$.  We may assume $F_i'\cap (\Sigma\times I)$ is of the form $\Sigma\times\{x\}$ ($x\in I$).  There is a subarc of an $I$-fiber of $\Sigma\times I$ connecting $F_1'$ to $F_2'$, and we add a tube along such a vertical arc to $F_1'\cup F_2'$.  Since $g(F_1)=g(F_2)=n$, the resulting surface has genus $2n$.  If we compress this surface along the meridional curve of the tube, we get $F_1'\cup F_2'$, and if we maximally compress it on the other side, we get a peripheral torus parallel to $T$.  This means that the resulting surface is also a Heegaard surface for $M_0$ of genus $2n$.

\begin{figure}
  \centering
\psfrag{a}{(a)}
\psfrag{b}{(b)}
\psfrag{c}{$X\times S^1$ is the 3-dimensional picture}
\psfrag{X}{$X$}
\psfrag{1}{$F_1'$}
  \includegraphics[width=4in]{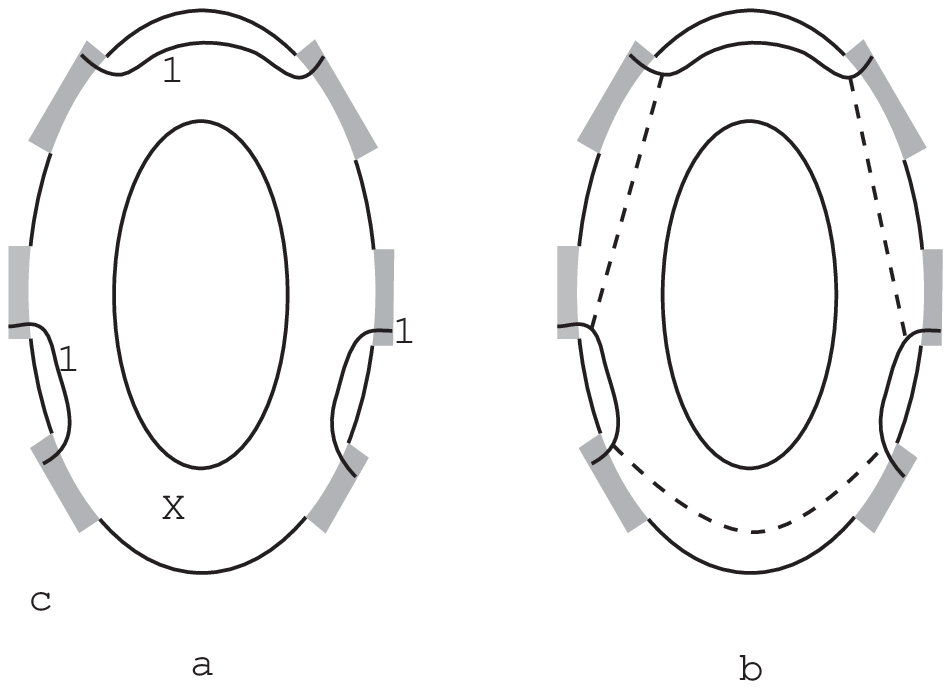}
  \caption{}
  \label{FHeeg}
\end{figure}

Let $A_0$ be a meridional annulus in $\partial E(k)=T\times\{0\}\subset\partial M_0$.  Let $M_0^+$ be the annulus sum of $M_0$ and a product $\hat{T}=T^2\times I$, identifying $A_0$ to an essential annulus in $\partial \hat{T}$.  We view $A_0$ as an annulus properly embedded in $M_0^+$, dividing $M_0^+$ into $M_0$ and $\hat{T}$.  Thus $\partial M_0^+$ has 4 components: $F_1$, $F_2$, a torus $T_0$ which is the union of $T\times\{0\}-A_0$ and an annulus in $\partial \hat{T}$, and a torus component $T_1$ from $\partial \hat{T}$.  Since $g(M_0)\le 2n$, by section~\ref{SA}, $g(M_0^+)\le 2n+1$.

Our next goal is to compute the Heegaard genera of $M_0$ and $M_0^+$.  As $M_0^+=M_0\cup \hat{T}$, we may view $\Sigma\times I$ and $T\times I$ as submanifolds of both $M_0$ and $M_0^+$.

\begin{lemma}\label{LSigma}
\begin{enumerate}
  \item Either $g(M_0)=2n$ or $M_0$ contains a minimal genus Heegaard surface $S$ such that $S\cap(\Sigma\times I)$ is a connected surface in the form $\Sigma\times\{x\}$ ($x\in I$).
  \item Either $g(M_0^+)=2n+1$ or $M_0^+$ contains a minimal genus Heegaard surface $S$ such that $S\cap(\Sigma\times I)$ is a connected surface in the form $\Sigma\times\{x\}$ ($x\in I$).
\end{enumerate}
\end{lemma}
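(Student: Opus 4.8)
The plan is to analyze how a minimal genus Heegaard surface meets the incompressible annuli $\mathcal{A}=\bigcup_{i=1}^{2n}A_i$ that separate $\Sigma\times I$ from $T\times I$, and to treat (1) and (2) in parallel. I write the argument for $M_0$ and indicate at the end the (minor) changes for $M_0^+$. Since $\Sigma\times I$ and $T\times I$ are irreducible and are glued along the incompressible, $\partial$-incompressible annuli $A_i$, the manifold $M_0$ is irreducible with incompressible boundary. Let $S$ be a minimal genus Heegaard surface of $M_0$, and isotope it to minimize $|S\cap\mathcal{A}|$. An innermost-disk argument, using the incompressibility of $\mathcal{A}$ and the irreducibility of $M_0$, shows that each component of $S\cap\mathcal{A}$ is essential in $\mathcal{A}$. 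By standard arguments (any compression of $S\cap(\Sigma\times I)$ can be supported in the handlebody $\Sigma\times I$ and then used either to reduce $|S\cap\mathcal{A}|$ further or, via minimality of the genus, shown to be trivial), I may in addition take $S\cap(\Sigma\times I)$ to be incompressible in $\Sigma\times I$. The Claim then applies: each component of $S\cap(\Sigma\times I)$ is either a $\partial$-parallel annulus cut off a subannulus of some $A_i$, or a horizontal copy $\Sigma\times\{x\}$. Pushing off all the $\partial$-parallel annuli leaves exactly $p\ge 0$ parallel horizontal copies $\Sigma\times\{x_1\},\dots,\Sigma\times\{x_p\}$, whose interiors are disjoint from $S$, so these copies cut $\Sigma\times I$ into product slabs $P_0,\dots,P_p$ that lie alternately in the two compression bodies $V$ and $W$ of the splitting.

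The conclusion then follows from a parity analysis of $p$. The two end slabs $P_0=\Sigma\times[0,x_1]$ and $P_p=\Sigma\times[x_p,1]$ meet $F_1$ and $F_2$ respectively, and they lie on the same side of $S$ precisely when $p$ is even. Suppose $p$ is even (this includes $p=0$, where $P_0=\Sigma\times I$). Then both $F_1$ and $F_2$ lie in $\partial_-V$ for a single compression body $V$, so $\partial_-V$ has at least two components while $\partial_+V=S$ is connected; hence $V$ contains at least one one-handle, and $\chi(S)\le\chi(F_1)+\chi(F_2)-2=2(2-2n)-2$, which gives $g(S)\ge 2n$ and therefore $g(M_0)=2n$.

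Suppose instead that $p$ is odd. If $p=1$, then $S\cap(\Sigma\times I)=\Sigma\times\{x\}$ is connected and $S$ is exactly the surface required by the lemma. If $p\ge 3$, pick an innermost internal slab $P_j=\Sigma\times[x_j,x_{j+1}]$ ($1\le j\le p-1$); it lies entirely in one compression body, say $W$, with both horizontal faces $\Sigma\times\{x_j\},\Sigma\times\{x_{j+1}\}$ contained in $\partial_+W=S$. These two parallel copies of $\Sigma$ cobound the product $P_j$ on the $W$ side, so a standard isotopy sliding one copy across $P_j$ removes both copies and reduces $p$ by two without increasing the genus. Iterating, I reach $p=1$ and again obtain a minimal genus Heegaard surface in the desired form.

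I expect the main obstacle to be making the normalization of the first paragraph fully rigorous while keeping $S$ a single minimal genus Heegaard surface: controlling weak reducibility so that the compressions pushed into $\Sigma\times I$ genuinely leave $S$ Heegaard and minimal, and verifying that the slab-removal isotopy of the third paragraph is a genuine ambient isotopy (rather than a destabilization that could a priori lower the genus below the minimum). For part (2), the argument is identical: the annulus sum with $\hat T=T^2\times I$ along the meridional annulus $A_0\subset T\times\{0\}$ leaves $\Sigma\times I$ and the annuli $A_i$ untouched, so the same minimization, the Claim, and the parity count apply verbatim with the genus bound $g(M_0)\le 2n$ replaced by $g(M_0^+)\le 2n+1$; the even case then yields $g(M_0^+)=2n+1$, while the odd case yields a minimal genus Heegaard surface meeting $\Sigma\times I$ in a single connected copy $\Sigma\times\{x\}$.
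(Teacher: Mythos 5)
Your proposal has two genuine gaps, and they are precisely the two places where the paper has to work hardest.

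First, the normalization in your opening paragraph --- ``I may in addition take $S\cap(\Sigma\times I)$ to be incompressible in $\Sigma\times I$'' --- is not a standard argument and is in fact the crux of the whole lemma. A Heegaard surface is compressible to both sides, and its intersection with a region cut off by essential annuli cannot in general be isotoped to be incompressible: the genus-$2n$ surface of Construction 2 in the paper meets $\Sigma\times I$ in two horizontal sheets joined by an essential vertical tube, and no isotopy removes that tube. What is true (and what the paper uses) is much weaker: if $S$ is \emph{strongly irreducible}, then by \cite[Lemma 3.7]{L5} one may arrange that at most one component of $S\cap(\Sigma\times I)$ is strongly irreducible and the rest are incompressible; the strongly irreducible component $P$ is then eliminated not by an isotopy but by a replacement argument --- maximally compress $P$ to both sides, analyze the ``shadows'' of the components of $P_{\pm}$, and substitute for $P$ a tubing $P'$ of $P_+$ along $(\partial\Sigma)\times I$, checking that the new closed surface is still a Heegaard surface with $\chi(P)\le\chi(P')$. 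If $S$ is \emph{weakly reducible}, none of this applies directly and one must untelescope $S$ into strongly irreducible Heegaard surfaces $P_i$ of submanifolds $N_i$, run the argument on each piece, and amalgamate back. You flag this issue yourself as ``the main obstacle,'' but flagging it does not fill it; without it the parity analysis has nothing to operate on. Relatedly, your slab-removal move for $p\ge 3$ is not an ambient isotopy (you cannot slide one sheet through the parallel sheet bounding the same slab), and the paper avoids any such move: it rules out three or more horizontal sheets by Euler characteristic, $\chi(S)\le 3(2-2n)$ forcing $g(S)\ge 3n-2\ge 2n+1$ when $n\ge 3$ --- note your argument never uses the hypothesis $n\ge 3$, which is a warning sign.

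Second, your treatment of $M_0^+$ in the even-parity case is wrong as stated. Having $F_1$ and $F_2$ in the boundary of the same compression body only gives $g(S)\ge g(F_1)+g(F_2)=2n$, whereas the conclusion you need is $g(M_0^+)=2n+1$; a priori there could be a genus-$2n$ splitting of $M_0^+$ of this type, and then neither alternative of part (2) would hold. The paper needs a separate argument here: if one of the tori $T_0,T_1\subset\partial M_0^+$ lies on the $F_1,F_2$ side, then $g(S)\ge 2n+1$ and we are done; otherwise $g(S)=2n$ would force the $F_1,F_2$-side compression body to be $S$ plus a single $2$-handle along a separating curve, giving $H_1(M_0^+)=H_1(V)$ and exhibiting $H_1(T_0)$ and $H_1(T_1)$ as direct summands of $H_1(M_0^+)$ --- contradicting the fact that the vertical annulus in $\hat{T}=T^2\times I$ makes an essential curve of $T_0$ homologous to a curve in $T_1$. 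This homology step is entirely absent from your proposal, and without it part (2) fails.
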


\begin{proof}
We prove the two parts of the lemma at the same time, since $M_0$ and $M_0^+$ are similar.
Let $S$ be a minimal genus Heegaard surface in our 3--manifold ($M_0$ or $M_0^+$).  We will show that we can either isotope $S$ or change $S$ to a possibly different Heegaard surface satisfying the conditions in the lemma.  We have two cases.

\vspace{10pt}

\noindent \emph{Case (a)}. $S$ is strongly irreducible.

\vspace{10pt}

Since $(\partial\Sigma)\times I$ consists of essential annuli in our 3--manifold, by \cite[Lemma 3.7]{L5} (also see \cite{L4}), after isotopy, we may assume that at most one component of $S\cap(\Sigma\times I)$ is strongly irreducible and all other components are incompressible in $\Sigma\times I$ (recall that a surface is strongly irreducible if it is compressible on both sides and every compressing disk on one side meets every compressing disk on the other side).  Moreover, after pushing components of $S\cap(\Sigma\times I)$ out of $\Sigma\times I$, we may assume that no component of $S\cap(\Sigma\times I)$ lies in a collar neighborhood of the annuli $(\partial\Sigma)\times I$.  So by the claim in the proof of Lemma 3.1, each incompressible component of $S\cap(\Sigma\times I)$ is of the form $\Sigma\times\{x\}$ ($x\in I$).

\begin{claim} There is a minimal genus Heegaard surface $S'$ of the 3--manifold ($M_0$ or $M_0^+$) such that each component of $S'\cap(\Sigma\times I)$ is of the form $\Sigma\times\{x\}$ ($x\in I$).
\end{claim}
\begin{proof}[Proof of the Claim]
By our assumption above, either $S$ satisfies the claim or exactly one component of $S\cap(\Sigma\times I)$ is strongly irreducible.
Suppose $P$ is a strongly irreducible component of $S\cap(\Sigma\times I)$.  Next we are basically repeating an argument in \cite[Lemma 3.9 and Lemma 3.10]{L5}.  We call the two sides of $P$ plus and minus sides.  Let $P_+$ and $P_-$ be the surfaces obtained by maximally compressing $P$ in $\Sigma\times I$ on the plus and minus sides respectively and deleting all the 2--sphere components. We may assume that $\partial P_+=\partial P=\partial P_{-}=P\cap P_+=P\cap P_-$.  There is a connected submanifold of $\Sigma\times I$ between $P$ and $P_+$ (similar to a compression body), which we denote by $W_{+}$, and similarly, a connected submanifold between $P$ and $P_-$, denoted by $W_{-}$.   Furthermore, $W_{+}\cap W_{-}=P$. Hence there is a connected region $W=W_+\cup W_-$ between $P_+$ and $P_-$ which contains $P$.  Clearly $\partial W=P_+\cup P_-$.
Since $P$ is strongly irreducible, by \cite{CG} (also see \cite[Lemma 5.5]{S}), $P_\pm$ is incompressible in $\Sigma\times I$.  Hence each component of $P_\pm$ is either a $\partial$-parallel annulus or a surface of the form $\Sigma\times\{x\}$.

Let $\Gamma$ be a component of $P_{+}$ or $P_{-}$. We define the shadow of $\Gamma$, denoted by $P(\Gamma)$, as follows.
If $\Gamma$ is a $\partial$-parallel annulus, then $P(\Gamma)$ is the solid torus bounded by $\Gamma$ and a subannulus of $A_{i}\subset(\partial\Sigma)\times I$. By our assumption that $P$ is not in a collar neighborhood of $(\partial\Sigma)\times I$, $W$ must lie outside $P(\Gamma)$.
If $\Gamma$ is in the form $\Sigma\times\{x\}$, then $\Gamma$ divides $\Sigma\times I$ into two interval-bundles, one of which contains $W$, and we define $P(\Gamma)$ to be the interval-bundle that does not contain $W$.   Thus, in either case, $W$ lies outside $P(\Gamma)$.  Moreover, since $\partial W=P_+\cup P_-$, $W$ and $P(\Gamma)$ lie on the two sides of $\Gamma$ and $W\cap P(\Gamma)=\Gamma$.

Let $\Gamma_1$ and $\Gamma_2$ be two components of $P_+ \amalg P_-$.  We say that $\Gamma_1$ and $\Gamma_2$ are non-nested if $\Int(P(\Gamma_{1}))\cap \Int(P(\Gamma_2))= \emptyset$, and we claim that $\Gamma_1$ and $\Gamma_2$ are always non-nested.  By our definition of $P(\Gamma)$, if $\Int(P(\Gamma_{1}))\cap \Int(P(\Gamma_2))\neq \emptyset$, then either $P(\Gamma_1)\subset P(\Gamma_2)$ or $P(\Gamma_2)\subset P(\Gamma_1)$.  Suppose the claim is false and $P(\Gamma_2)\subset P(\Gamma_1)$.  Since $W\cap P(\Gamma_2)=\Gamma_2$ and since $\partial W=P_+\cup P_-$, this means that $W\subset P(\Gamma_1)$, which contradicts that $W$ lies outside $P(\Gamma_1)$.

Since $W\cap P(\Gamma)=\Gamma$ for each component $\Gamma$ of $P_+\amalg P_-$, and since $\partial W=P_+\cup P_-$, the union of $W$ and all the shadows $P(\Gamma)$'s is the whole of $\Sigma\times I$. Thus $W$ is isotopic to $\Sigma\times I$ and the isotopy can be realized by pushing each $\Gamma$ to $\partial P(\Gamma)-\Gamma$ in $P(\Gamma)$. This implies that $P_+\amalg P_-$ has exactly two components in the form $\Sigma\times\{x\}$ and all other components are non-nested $\partial$-parallel annuli.

Without loss of generality, we may assume that $P_+$ has at least one component in the form $\Sigma\times\{x\}$, and  $P_+$ has $k$ components.
As shown in Figure~\ref{Fmove}(a, b), we can connect the $k$ components of $P_+$ by adding $k-1$ tubes to $P_+$ along $k-1$ unknotted arcs which can be isotoped into $(\partial\Sigma)\times I$, and the resulting connected surface, which we denote by $P'$, is a connected sum of all the components of $P_+$.  $P'$ and $P$ have some similar properties: If one compresses $P'$ along the meridional curves of the $k-1$ tubes, one gets back $P_+$; if one maximally compresses $P'$ on the other side, one gets a collection of non-nested $\partial$-parallel surfaces in $\Sigma\times I$, which means that the resulting surface is $P_-$.  Moreover, by our construction, $P$ can be obtained by adding 1--handles (or tubes) to $P_+$ on the same side of $P_+$.  Since $P_+$ has $k$ components and $P$ is connected, to obtain $P$, we need to add at least $k-1$ tubes to $P_+$.  This implies that $\chi(P)\le\chi(P')$.   Now we can replace $P$ by $P'$.  The union of $P'$ and all the other components of $S\cap(\Sigma\times I)$ and $S-(\Sigma\times I)$ is a closed surface, which we denote by $S'$.  Since $S$ is a Heegaard surface, the properties of $P'$ and $P_\pm$ above imply that $S'$ is also a Heegaard surface (see the proof of Lemma 3.9 in [7]). Since $\chi(P)\le\chi(P')$, we have $\chi(S)\le\chi(S')$ and $g(S)\ge g(S')$.  Since $S$ is a minimal genus Heegaard surface, $S'$ must also be a minimal genus Heegaard surface and  $g(S')=g(S)$.

\begin{figure}
  \centering
\psfrag{a}{(a)}
\psfrag{b}{(b)}
\psfrag{c}{(c)}
\psfrag{A}{$(\partial\Sigma)\times I$}
\psfrag{S}{$P_+$}
\psfrag{2}{$P'$}
  \includegraphics[width=5in]{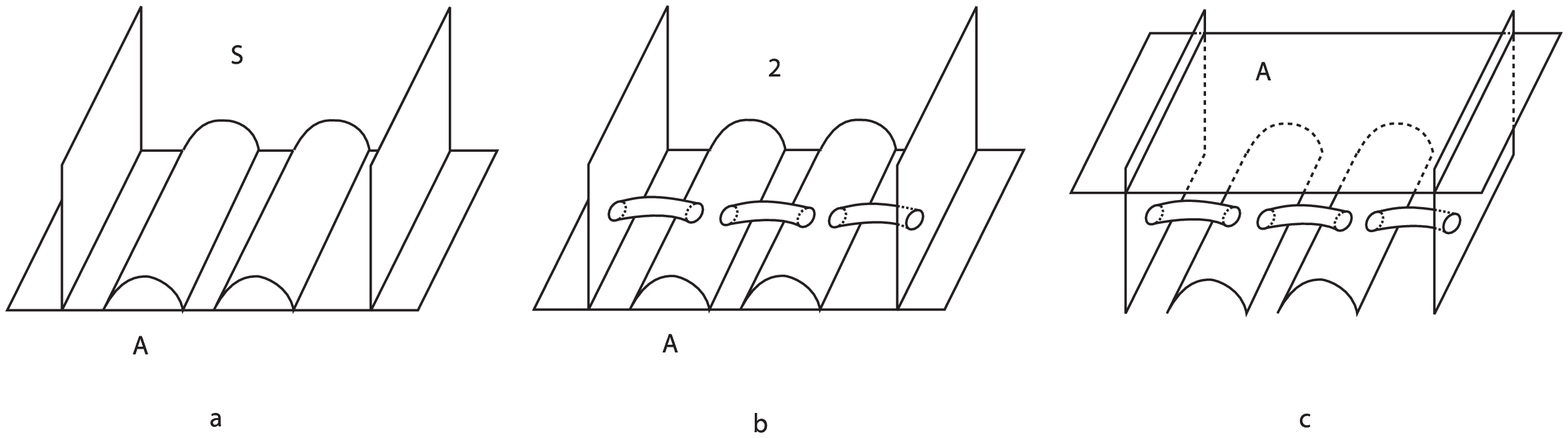}
  \caption{}
  \label{Fmove}
\end{figure}

As shown in Figure~\ref{Fmove}(c), we can isotope $S'$ by pushing the $\partial$-parallel annuli in $P_+$ and all the added tubes across $(\partial\Sigma)\times I$ and out of $\Sigma\times I$.  After this isotopy, each component of $S'\cap (\Sigma\times I)$ is of the form $\Sigma\times\{x\}$.
\end{proof}

Next we consider the number of components in $S'\cap(\Sigma\times I)$.
If $S'\cap(\Sigma\times I)$ has at least 3 components, then $\chi(S')\le 3\chi(\Sigma)=3(2-2n)=6-6n=2-2(3n-2)$ and $g(S')\ge 3n-2$.  Since $n\ge 3$, we have $g(S')\ge 2n+1$.

Recall that our 3--manifold is either $M_0$ or $M_0^+$, and by our construction prior to the lemma, we have $g(M_0)\le 2n$ and $g(M_0^+)\le 2n+1$.

We now separately discuss $M_0$ and $M_0^+$.
Suppose our 3--manifold is $M_0$. Since $S'$ is a minimal genus Heegaard surface and since $g(M_0)\le 2n$, the conclusion above implies that $S'\cap(\Sigma\times I)$ has at most 2 components.  If $S'\cap(\Sigma\times I)$ has exactly two components, then the two components of $\Sigma\times\partial I$ lie on the same side of the Heegaard surface $S'$, and hence $F_1$ and $F_2$ are boundary components of the same compression body in the Heegaard splitting of $M_0$ along $S'$.  As $g(F_1)=g(F_2)=n$, this implies that $g(S')\ge g(F_1)+g(F_2)=2n$.  As $S'$ is a minimal genus Heegaard splitting and since $g(M_0)\le 2n$, this means that $g(S')=g(M_0)=2n$.  Thus either  $g(M_0)=2n$ or $S'\cap (\Sigma\times I)$ has a single component.  Hence part (1) of the lemma holds in Case (a).

Suppose our 3--manifold is $M_0^+$. Since $S'$ is a minimal genus Heegaard surface, we have $g(S')=g(M_0^+)\le 2n+1$. Let $M_0^+=W\cup_{S'} V$ be the Heegaard splitting along $S'$.  We may assume $S'$ is strongly irreducible, otherwise we are in Case (b) below.  By the conclusion above, if $S'\cap(\Sigma\times I)$ has at least 3 components, then $g(S')\ge 2n+1$, hence $g(M_0^+)=2n+1$ and part (2) of the lemma holds.  To prove part (2), it remains to consider the possibility that $S'\cap(\Sigma\times I)$ has 2 components.  Suppose this is the case, then similar to the argument above, both $F_1$ and $F_2$ lie in the same compression body, say $W$, in the Heegaard splitting $M_0^+=W\cup_{S'} V$ along $S'$.  As $\partial_-W$ contains both $F_1$ and $F_2$, we have $g(S')\ge g(\partial_-W)\ge g(F_1)+g(F_2)=2n$.

%Moreover, $W\cap(\Sigma\times I)$ is a product neighborhood of $\Sigma\times\partial I$, so we can find a compressing disk $D_W$ of $W$ that is disjoint from $\Sigma\times I$.

Note that two components of $\partial M_0^+$ are tori.  If $\partial_- W$ also contains a torus component of $\partial M_0^+$, then $g(S')\ge g(\partial_-W)\ge g(F_1)+g(F_2)+1=2n+1$, which implies that $g(M_0^+)=2n+1$, and hence part (2) of the lemma holds.  Thus it remains to consider the case that both tori in $\partial M_0^+$ lie in the compression body $V$.  Suppose the lemma is false in this case and $g(S')\le 2n$.  By our conclusion earlier that $g(S')\ge g(\partial_-W)\ge g(F_1)+g(F_2)=2n$, we have $g(S')=2n$ and hence the compression body $W$ can be obtained by adding a single 2--handle to (a product neighborhood of) $S'$ along a separating curve in $S'$.  Thus $M_0^+$ can be obtained by adding a single 2--handle to $V$.  Since the core curve of the 2--handle is a separating (and null-homologous) curve in $S'$, we have the identity $H_1(M_0^+)=H_1(V)$ on the first homology.

The compression body $V$ can be obtained by adding 1--handles to a product neighborhood of $\partial_-V$.  Since $\partial_-V$ contains both boundary tori $T_0$ and $T_1$ of $\partial M_0^+$, we may view the homology groups $H_1(T_0)$ and $H_1(T_1)$ as direct summands of $H_1(V)$.  By our conclusion $H_1(M_0^+)=H_1(V)$ above,  $H_1(T_0)$ and $H_1(T_0)$ are direct summands of $H_1(M_0^+)$.  Recall that $M_0^+$ is an annulus sum of $M_0$ and the product $\hat{T}=T^2\times I$.  Thus there is a vertical annulus $\alpha\times I\subset\hat{T}=T^2\times I$ that is also a properly embedded annulus in $M_0^+$ connecting $T_0$ to $T_1$.  This means that an essential closed curve in $T_0$ is homologous (in $M_0^+$) to a curve in $T_1$, which contradicts our conclusion that $H_1(T_0)$ and $H_1(T_1)$ are direct summands of $H_1(M_0^+)$.

\vspace{10pt}

\noindent \emph{Case (b)}. $S$ is weakly reducible.

\vspace{10pt}

%Recalling the structures of $M_{0}$ and $M_{0}^{+}$: $M_{0}$ is the sum of $\Sigma\times I$ and $T\times I$ by gluing $\partial\Sigma\times I$ and $2n$ annuli on $T\times\bigl\{1\bigr\}$,  $M_{0}^{+}$ is the annulus sum of $M_{0}$ and $T^{*}\times I$, where $T$ and $T^{*}$ are two tori, and  both $M_{0}$ and $M_{0}^{+}$ are two submanifolds of $S^{3}$. This implies that each closed incompressible surface in $M_{0}$ or $M_{0}^{+}$ is separating. Now if $F$ is an essential closed surface in $M_{0}$, then $F\cap(\Sigma\times I)\neq\emptyset$.  If $F$ is an essential closed surface in $M_{0}$, then either $F\cap(\Sigma\times I)\neq\emptyset$ or $F$ is in the form $T\times\bigl\{x\bigr\}$, where $x\in(0,1)$. In this case, $F$ cuts $M_{0}^{+}$ into $M_{0}$ and the annulus sum of $T\times I$ and $T^{*}\times I$ up to homeomorphism.

We consider the untelescoping of the Heegaard splitting, see \cite{ST1}.  Since our 3--manifold is a submanifold of $S^3$, every closed surface in our 3--manifold is separating.  So by \cite{ST1}, there are a collection of separating closed incompressible surfaces $Q_1,\dots, Q_m$ that divide our 3--manifold ($M_0$ or $M_0^+$) into $m+1$ submanifolds $N_0,\dots, N_m$.   There is a strongly irreducible Heegaard surface $P_i$ in each $N_i$ such that $S$ is an amalgamation of these $P_i$'s along the incompressible surfaces $Q_i$'s.  Since $S$ is assumed to be weakly reducible, $g(P_i)\le g(S)-1$ and $g(Q_i)\le g(S)-1$ for each $i$. Since $S$ is a minimal Heegaard surface, $Q_{i}$ is not $\partial$-parallel in the 3--manifold ($M_{0}$ or $M_{0}^{+}$), and $Q_{i}$ is not parallel to $Q_{j}$ if $i\neq j$.

Recall that $\Sigma\times I$ is viewed as a submanifold and $(\partial\Sigma)\times I$ is a collection of essential annuli in our 3--manifold ($M_{0}$ or $M_{0}^{+}$).  By the claim in the proof of Lemma~\ref{Lprime}, after isotopy, we may assume that each component of $Q_i\cap(\Sigma\times I)$ is of the form $\Sigma\times\{x\}$.  Thus each component of $N_i\cap (\Sigma\times I)$ is a product $\Sigma\times J$ ($J\subset I$).  Since each $P_i$ is strongly irreducible, similar to Case (a), after isotopy, we may assume that
\begin{enumerate}
  \item each component of $Q_i\cap(\Sigma\times I)$ is of the form $\Sigma\times\{x\}$,
  \item at most one component of $P_i\cap(\Sigma\times I)$ is strongly irreducible and all other components are incompressible in $\Sigma\times I$,
  \item no component of $P_i\cap(\Sigma\times I)$ lies in a collar neighborhood of the annuli $(\partial\Sigma)\times I$.
\end{enumerate}

Let $\Sigma\times J$ ($J\subset I$) be a component of $N_i\cap(\Sigma\times I)$ for some $i$.  We may assume that each component of $(\partial\Sigma)\times J$ is an essential annulus in $N_{i}$.  Now we apply the argument in the proof of the Claim in Case (a) on $\Sigma\times J$ and $P_i$.  As in the Claim in Case (a), after isotopy and replacing $P_i$ by a possibly different Heegaard surface $P_i'$, we may assume that each component of $P_i'\cap(\Sigma\times I)$ is of the form $\Sigma\times\{x\}$.  Note that since $S$ is a minimal genus Heegaard surface, $P_i$ and $P_i'$ must be minimal genus Heegaard surfaces of $N_i$.

If $P_i'\cap(\Sigma\times J)$ has more than two components, then $\chi(P_i')\le 3(2-2n)$.  Similar to Case (a), since $n\ge 3$, this means that $g(P_i)=g(P_i')\ge 2n+1$.  However, since $g(P_i)\le g(S)-1$, we have $g(S)\ge 2n+2$.  This contradicts that $g(M_0)\le 2n$ and $g(M_0^+)\le 2n+1$.  Thus $P_i'\cap(\Sigma\times J)$ has at most two components.

Suppose $P_i'\cap(\Sigma\times J)$ has two components.  Then since $P_i'$ is a Heegaard surface of $N_i$, both components of $\Sigma\times\partial J$ lie on the same side of $P_i'$.  Since $P_i'$ is not parallel to a component of $\partial N_i$, we have $\chi(P_i')\le\chi(\Sigma\times\partial J)-2=2(2-2n)-2=2-4n$.  This means that $g(P_i')\ge 2n$.  Since $g(P_i')=g(P_i)\le g(S)-1$, we have $g(S)\ge 2n+1$.
If our 3--manifold is $M_0$, then $g(M_0)=g(S)\le 2n$ and this is a contradiction.  If our 3--manifold is $M_0^+$, then $g(M_0^+)=g(S)\le 2n+1$.  So we have $g(M_0^+)=2n+1$ and part (2) of the lemma holds.  Thus, to finish the proof of the lemma, we may assume that $P_i'\cap(\Sigma\times J)$ has exactly one component for each component $\Sigma\times J$ of $N_i\cap(\Sigma\times I)$ and for any $i$.

Note that, by amalgamating these Heegaard surfaces $P_i'$ along the incompressible surfaces $Q_i$, we can obtain a minimal genus Heegaard surface $S'$ for our 3--manifold ($M_0$ or $M_0^+$).  Since $P_i'$ intersects each component of $N_i\cap(\Sigma\times I)$ in a connected surface of the form $\Sigma\times\{x\}$, after the amalgamation, $S'\cap(\Sigma\times I)$ is a connected surface of the form $\Sigma\times\{x\}$ (see \cite[Figure 3]{Sch} for a picture of amalgamation).  We can also see this through the untelescoping, which is a rearrangement of 1-- and 2-- handles, see \cite{ST1}.  Since $P_i'\cap(\Sigma\times J)$ is a connected surface of the form $\Sigma\times\{x\}$ for every component of $N_i\cap(\Sigma\times I)$ and for each $i$, if we maximally compress $P_i'$ on either side, we can choose the compressions to be disjoint from $\Sigma\times I$.  Thus we may view that all the 1-- and 2--handles (in the untelescoping for $S'$) are outside $\Sigma\times I$.  Hence $S'\cap(\Sigma\times I)$ is a connected surface of the form $\Sigma\times\{x\}$ and the lemma holds in Case (b).
\end{proof}

\begin{lemma}\label{Lgenus}
 $g(M_0)=2n$ and $g(M_0^+)=2n+1$.
\end{lemma}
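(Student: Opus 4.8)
The plan is to combine the upper bounds coming from the two constructions with a lower bound extracted from Lemma~\ref{LSigma}. Construction 1 and Construction 2 already give $g(M_0)\le 2n$, and the annulus-sum estimate preceding Lemma~\ref{LSigma} gives $g(M_0^+)\le 2n+1$, so only the reverse inequalities remain. First I would invoke Lemma~\ref{LSigma}: if $g(M_0)=2n$ (resp. $g(M_0^+)=2n+1$) there is nothing to prove, so I may assume instead that there is a minimal genus Heegaard surface $S$ with $S\cap(\Sigma\times I)=\Sigma\times\{x\}$ a single surface, and the task becomes to derive a contradiction from $g(S)<2n$ (resp. $g(S)<2n+1$).

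In this single-component situation $\Sigma\times\{x\}$ splits $\Sigma\times I$ into the two product collars $\Sigma\times[0,x]$ and $\Sigma\times[x,1]$ of $\Sigma\times\{0\}\subset F_1$ and $\Sigma\times\{1\}\subset F_2$. Since each $F_i$ is connected and meets $\Sigma\times I$ in exactly one of these collars, $S$ separates $F_1$ from $F_2$; writing the splitting as $V\cup_S W$, I may assume $F_1\subset\partial_-V$ and $F_2\subset\partial_-W$, while the torus $T$ (resp. the tori $T_0$ and $T_1$) is distributed between the two sides. The goal is then to show $g(S)\ge 2n$ (resp. $g(S)\ge 2n+1$).

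The heart of the matter, and the step I expect to be the main obstacle, is precisely this lower bound, because the formal compression-body inequality $g(\partial_+)\ge\sum g(\text{components of }\partial_-)$ only yields $g(S)\ge g(F_2)+g(T)=n+1$, far short of $2n$ when $n\ge 3$. To do better one cannot treat $V$ and $W$ as arbitrary compression bodies, but must use the specific geometry of $M_0$. The key extra input is that $W$ contains the collar $\Sigma\times[x,1]$, whose underlying space is a handlebody of genus $2n-1$ attached to $F_2$ along the planar subsurface $\Sigma\times\{1\}$, together with the incompressibility of $F_2$ (and of $T$). Letting $P$ denote the part of $S$ lying outside $\Sigma\times I$, a direct Euler-characteristic count using $\chi(\Sigma)=2-2n$ gives $\chi(P)=2n-2g(S)$, so $g(S)<2n$ would force $P$ to be quite simple. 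I would argue this is impossible: since $\Sigma\times\{x\}$ is incompressible and parallel into $F_2$, the genuine handle attachments producing $S$ from $\partial_-W$ take place in $W\cap(T\times I)$ and must reconcile the system of $n$ annuli in which $F_2$ meets $T\times\{1\}$ with the distinct annular pattern on the $F_1$ side; because the $2n$ annuli $(\partial\Sigma)\times I$ are essential, $P$ cannot be realized with so few handles, and $g(S)\ge 2n$ follows. The same analysis, carried out in $M_0^+$, gives $g(S)\ge 2n$ there as well.

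To upgrade this to $g(M_0^+)\ge 2n+1$, I would rule out the borderline value $g(S)=2n$ by a homological argument in the spirit of Case (a) of the proof of Lemma~\ref{LSigma}. At $g(S)=2n$ the compression-body structure is rigid enough that $W$ is obtained from $\partial_-W$ by a single separating $2$--handle, so $H_1(M_0^+)=H_1(V)$ and the two tori $T_0,T_1$ lie in $\partial_-V$ as independent direct summands of $H_1(M_0^+)$; the vertical annulus $\alpha\times I\subset\hat{T}=T^2\times I$ then makes an essential curve of $T_0$ homologous to one of $T_1$, a contradiction. Hence $g(S)\ge 2n+1$, and together with the upper bounds this yields $g(M_0)=2n$ and $g(M_0^+)=2n+1$. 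The genuinely delicate part is the geometric lower bound $g(S)\ge 2n$ in the single-component case, where the generic Heegaard-theoretic estimates are insufficient and the incompressible product structure of $\Sigma\times I$ must be exploited in an essential way.
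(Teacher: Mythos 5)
Your reduction is set up correctly---the upper bounds from the two constructions and the annulus-sum estimate, plus the appeal to Lemma~\ref{LSigma} to reduce to the case where $S\cap(\Sigma\times I)$ is a single surface $\Sigma\times\{x\}$---and this matches the paper's framing exactly. But the entire content of the lemma is the lower bound $g(S)\ge 2n$ in that single-component case, and at precisely this point your proposal stops being a proof. You correctly observe that the generic compression-body inequality only gives $g(S)\ge g(F_2)+g(T)=n+1$, and then assert that ``because the $2n$ annuli $(\partial\Sigma)\times I$ are essential, $P$ cannot be realized with so few handles, and $g(S)\ge 2n$ follows.'' That sentence is the conclusion restated, not an argument: nothing in it explains why reconciling the annular pattern of $F_1$ with that of $F_2$ costs $n-1$ additional handles rather than fewer. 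The paper closes this gap with a genuinely different idea that your proposal does not contain: it cuts out $\Sigma\times I$ and glues in $n$ copies of $A\times I$ ($A$ an annulus) joining the pairs $A_{2i-1}\cup A_{2i}$, converting $M_0$ into $N=Y\times S^1$ and $M_0^+$ into $N^+=Y^+\times S^1$ with $Y$, $Y^+$ planar surfaces with $n+2$, resp.\ $n+3$, boundary circles; it replaces $S\cap(\Sigma\times I)$ by $n$ horizontal annuli to get a surface $S_N$ with $\chi(S_N)=\chi(S)-(2-2n)$, verifies using the compression properties from Lemma~\ref{LSigma} that $S_N$ is a Heegaard surface of the new manifold, and then quotes Schultens' classification of Heegaard splittings of $(\textit{surface})\times S^1$ to get $g(N)=n+1$ and $g(N^+)=n+2$, whence $g(S)=g(S_N)+n-1\ge 2n$ (resp.\ $2n+1$). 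Without some substitute for this step (or for Schultens' classification), your argument does not get off the ground.

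A secondary problem: your treatment of the borderline case $g(S)=2n$ for $M_0^+$ transplants the homological argument from Case~(a) of Lemma~\ref{LSigma}, but the hypotheses under which that argument works are absent here. In the paper that argument applies when $F_1$ and $F_2$ lie in the \emph{same} compression body $W$, so that $g(\partial_-W)\ge 2n=g(S')$ forces $W$ to be a product plus a single separating $2$--handle, giving $H_1(M_0^+)=H_1(V)$. In your single-component configuration $F_1$ and $F_2$ lie on \emph{opposite} sides, so $g(\partial_-W)\ge n$ is all you know, the rigidity conclusion fails, and the homological contradiction never materializes. In the paper's approach no such borderline analysis is needed: the extra $+1$ for $M_0^+$ falls out automatically because $Y^+$ has one more boundary circle than $Y$.
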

\begin{proof}
The idea of the proof is to first replace $\Sigma\times I$ by $annuli\times I$, which changes the 3--manifold ($M_0$ or $M_0^+$) into a product of a planar surface and $S^1$.  Then we apply the classification of Heegaard surfaces in such products \cite{Sch} to get a lower bound on the Heegaard genus.

Similar to the proof of Lemma~\ref{LSigma}, we consider the two cases $M_0$ and $M_0^+$ at the same time.  Let $S$ be a minimal genus Heegaard surface in our 3--manifold ($M_0$ or $M_0^+$).

By Lemma~\ref{LSigma}, we may assume that $S\cap(\Sigma\times I)$ is a connected surface of the form $\Sigma\times\{x\}$.  This implies that
\begin{enumerate}
  \item  $F_1$ and $F_2$ lie on different sides of $S$, and
  \item  if we maximally compress $S$ on either side, all the compressions can be chosen to be disjoint from $\Sigma\times I$.
\end{enumerate}

Recall that $F_1$ is the union of a component of $\Sigma\times\partial I$ and $n$ annuli which we denote by $E_1\dots, E_n$.
Let $A_1, A_2,\dots, A_{2n}$ be the $2n$ annuli in $(\partial\Sigma)\times I$.  Without loss of generality, we may suppose $E_i$ connects $A_{2i-1}$ to $A_{2i}$, $i=1,\dots,n$.
Now we construct a new 3--manifold by replacing $\Sigma\times I$ with $n$ copies of $A\times I$ ($A$ is an annulus) so that each copy of $A\times I$ connects a pair of annuli $A_{2i-1}\cup A_{2i}$ (i.e.~$A_{2i-1}\cup A_{2i}=(\partial A)\times I$).  This construction is for both $M_0$ and $M_0^+$, and we use $N$ and $N^+$ to denote the new 3--manifolds constructed from  $M_0$ and $M_0^+$ respectively.

Recall that $\overline{M_0-(\Sigma\times I)}=X\times S^1$ and $\overline{M_0^+-(\Sigma\times I)}=X^+\times S^1$, where $X$ is an annulus and $X^+$ is a pair of pants, see Figure~\ref{Fplanar}(a, b) for pictures of $X$ and $X^+$ (with $n=3$). Figure~\ref{Fplanar}(a) can be compared with Figure~\ref{FHeeg}(a). Note that these annuli $A_j$'s are in $T\times\{1\}\subset T\times I$.  Moreover, since $F_1$ is a boundary component of $M_0$, after renaming these annuli if necessary, we may assume that $A_1, A_2,\dots, A_{2n}$ lie in $T\times\{1\}$ in a cyclic order, see Figure~\ref{Fplanar}(a,b). Thus, as shown in Figure~\ref{Fplanar}(c, d), the new 3--manifolds $N=Y\times S^1$ and $N^+=Y^+\times S^1$, where $Y$ is a planar surface with $n+2$ boundary circles and $Y^+$ is a planar surface with $n+3$ boundary circles.

\begin{figure}
  \centering
\psfrag{a}{(a)}
\psfrag{b}{(b)}
\psfrag{c}{(c)}
\psfrag{d}{(d)}
\psfrag{1}{$A_1$}
\psfrag{2}{$A_2$}
\psfrag{3}{$A_3$}
\psfrag{4}{$A_4$}
\psfrag{5}{$A_5$}
\psfrag{6}{$A_6$}
\psfrag{x}{$X$}
\psfrag{X}{$X^+$}
\psfrag{y}{$Y$}
\psfrag{Y}{$Y^+$}
  \includegraphics[width=4in]{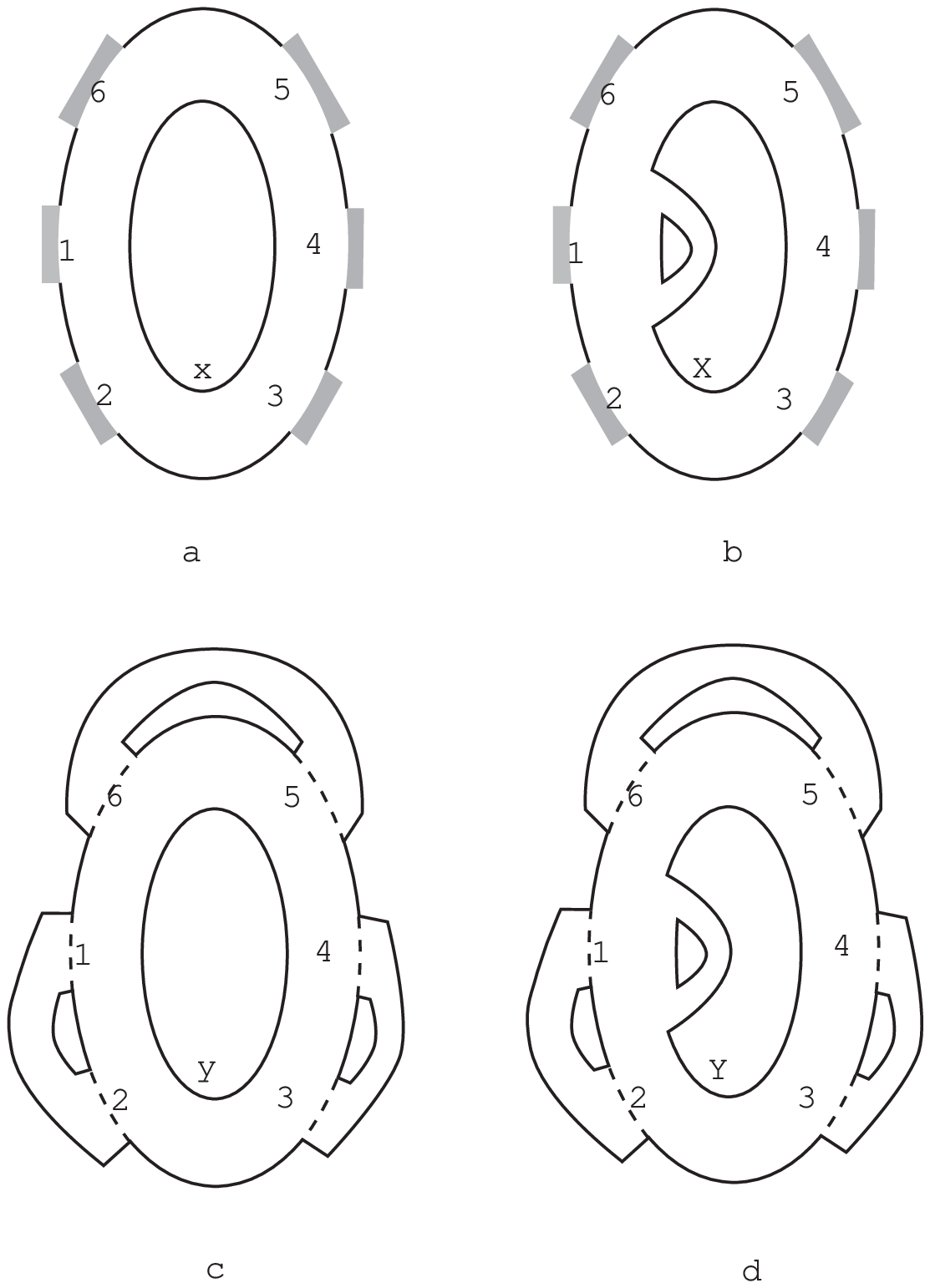}
  \caption{}
  \label{Fplanar}
\end{figure}

Since $S\cap(\Sigma\times I)$ is a connected surface of the form $\Sigma\times\{x\}$, similar to our construction of the manifolds $N$ and $N^+$, we can replace $S\cap(\Sigma\times I)$ by $n$ horizontal annuli of the form $A\times\{x\}$ in the $n$ copies of $A\times I$.  This yields a new closed surface, which we denote by $S_N$, in the new 3--manifold ($N$ or $N^+$).  Clearly $\chi(S_N)=\chi(S)-(2-2n)$.

Recall the $F_1$ and $F_2$ lie on two different sides of $S$ and we can choose maximal compressions for $S$ totally outside $\Sigma\times I$.  Since $S$ is a Heegaard surface, if we maximally compress $\overline{S-(\Sigma\times I)}$ on one side, the resulting surface contains a collection of $n$ non-nested $\partial$-parallel annuli (in $\overline{M_0-(\Sigma\times I)}$ or $\overline{M_0^+-(\Sigma\times I)}$) connecting the $n$ pairs of annuli $A_{2i-1}\cup A_{2i}$, while if we compress it on the other side, the resulting surface has a collection of $n$ non-nested $\partial$-parallel annuli connecting the $n$ pairs of annuli $A_{2i}\cup A_{2i+1}$ ($A_{2n+1}=A_1$).  This implies that the $2n$ circles of $S\cap ((\partial\Sigma)\times I)$ are connected by one component of $\overline{S-(\Sigma\times I)}$ and hence $S-(\Sigma\times I)$ is connected.  Thus $S_N$ is connected.  Moreover, since $S_N$ is obtained from $S$ by replacing the portion in $\Sigma\times I$ with annuli, maximal compressions for $S$ on either side (outside $\Sigma\times I$) correspond to maximal compressions for $S_N$.  Since the $n$ copies of $A\times I$ connect the $n$ pairs of annuli $A_{2i-1}\cup A_{2i}$ (see Figure~\ref{Fplanar}(c,d)), maximally compressing $S_N$ on either side yields peripheral tori (in $N$ or $N^+$).
This implies that $S_N$ is in fact a Heegaard surface in our new manifold ($N$ or $N^+$).  Since $\chi(S_N)=\chi(S)-(2-2n)$, we have $g(S_N)=g(S)-n+1$ and $g(S)=g(S_N)+n-1$.

In our construction, $N=Y\times S^1$ and $N^+=Y^+\times S^1$, where $Y$ is a planar surface with $n+2$ boundary circles and $Y^+$ is a planar surface with $n+3$ boundary circles.  By \cite{Sch}, the Heegaard genus $g(N)=n+1$ and $g(N^+)=n+2$.  Thus, if our manifold is $M_0$, we have $g(M_0)=g(S)=g(S_N)+n-1\ge g(N)+n-1=2n$.  Since we already know $g(M_0)\le 2n$, we have $g(M_0)=2n$.  Similarly, if our manifold is $M_0^+$, we have $g(M_0^+)=g(S)=g(S_N)+n-1\ge g(N^+)+n-1=2n+1$.  Since we already know $g(M_0^+)\le 2n+1$, we have $g(M_0^+)=2n+1$.
\end{proof}

\begin{lemma}\label{Lg}
Let $E=E(k)$ be the exterior of $k$ and let $M_0$, $M_1$ and $M_2$ be as above.
Let $E^+$ be the annulus sum of $E(k)$ and $T^2\times I$ which identifies a meridional annulus in $\partial E$ to a nontrivial annulus in $T^2\times\partial I$.
Suppose $g(M_1)=g_1$ and $g(M_2)=g_2$.  If $d$ is sufficiently large, then $g(E)=g_1+g_2$ and $g(E^+)=g_1+g_2+1$.
\end{lemma}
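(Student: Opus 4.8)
The plan is to prove the two equalities in parallel, obtaining the upper bounds by explicit amalgamation and the matching lower bounds by an untelescoping argument that exploits the large distance $d$.

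For the upper bounds I would invoke Schultens' amalgamation \cite{Sch} together with Lemma~\ref{Lgenus}. Recall that $g(F_1)=g(F_2)=n$, and that Construction~1 (preceding Lemma~\ref{LSigma}) produces a genus-$2n$ Heegaard splitting of $M_0$ separating $F_1$ from $F_2$; since $g(M_0)=2n$ this splitting is minimal and $F_1$, $F_2$ occur as negative boundary components on opposite sides. Amalgamating the minimal splitting of $M_1$ (with $F_1=\partial_-$) to this splitting of $M_0$ along $F_1$ gives a splitting of $M_1\cup_{F_1}M_0$ of genus $g_1+2n-n=g_1+n$ whose negative boundary still contains $F_2$; amalgamating again with the minimal splitting of $M_2$ along $F_2$ yields $g(E)\le(g_1+n)+g_2-n=g_1+g_2$. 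The identical computation, with the genus-$(2n+1)$ splitting of $M_0^+$ from Lemma~\ref{Lgenus} in place of $M_0$, gives $g(E^+)\le g_1+g_2+1$.

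For the lower bounds, let $S$ be a minimal genus Heegaard surface of $E$ (resp.\ $E^+$); by the upper bound $g(S)\le g_1+g_2$ (resp.\ $g_1+g_2+1$). I would untelescope $S$ as in \cite{ST1} into strongly irreducible thick surfaces and incompressible thin surfaces (taking $S$ itself if it is already strongly irreducible, as in Case~(a) of Lemma~\ref{LSigma}). The purpose of choosing $d$ large, say $d>2(g_1+g_2+1)$, is that by the distance bound on essential surfaces in a high-distance splitting (see \cite{S}) every closed essential surface in $M_1$ or $M_2$ has genus at least $d/2>g(S)$. Since each thin surface has genus at most $g(S)-1$, no thin surface can be a closed essential surface contained in $\Int(M_1)$ or $\Int(M_2)$. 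Using the incompressibility of $F_1,F_2$ and of the annuli $(\partial\Sigma)\times I$, together with the Claim in the proof of Lemma~\ref{Lprime}, I would then isotope the whole generalized splitting so that its thin surfaces meet $M_0$ in surfaces of the form $\Sigma\times\{x\}$ and so that, after discarding boundary-parallel pieces, $F_1$ and $F_2$ themselves appear among the thin surfaces. This pins the splitting so that it is cut along $F_1$ and $F_2$: its restriction to $M_1$ is a Heegaard splitting of $M_1$ and so has genus $\ge g_1$, its restriction to $M_2$ has genus $\ge g_2$, and its restriction to $M_0$ (resp.\ $M_0^+$) is a generalized Heegaard splitting contributing genus $\ge 2n$ (resp.\ $\ge 2n+1$) by Lemma~\ref{Lgenus}. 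Adding Euler characteristics across the two genus-$n$ surfaces $F_1,F_2$ then gives
$$g(S)=a_1+a_0+a_2-2n\ge g_1+g_2+2n-2n=g_1+g_2,$$
where $a_1,a_2,a_0$ denote the amalgamation genera of the restrictions to $M_1,M_2,M_0$; the same count with $a_0\ge 2n+1$ gives $g(E^+)\ge g_1+g_2+1$. Combined with the upper bounds this proves the lemma.

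The main obstacle is precisely the middle step: showing that a minimal Heegaard surface of $E$ (or $E^+$) must restrict to a \emph{full} Heegaard splitting of each tangle exterior $M_i$, with no genus saved by a cheaper interaction across the incompressible surfaces $F_1,F_2$. This is where the hypothesis that $d$ is sufficiently large is indispensable: it supplies the rigidity (absence of low-genus essential surfaces, and the impossibility of a lower-genus strongly irreducible surface cutting $M_i$) that forces $F_1$ and $F_2$ to appear as thin surfaces and rules out the collapse $g(E)<g_1+g_2$.
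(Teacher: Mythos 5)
Your upper bound is fine and is essentially the computation the paper also needs: amalgamating minimal genus splittings of $M_1$, $M_0$ (resp.\ $M_0^+$) and $M_2$ along $F_1$ and $F_2$, with $g(F_1)=g(F_2)=n$ and Lemma~\ref{Lgenus}, gives $g(E)\le g_1+g_2$ and $g(E^+)\le g_1+g_2+1$. The gap is in the lower bound. What your argument actually establishes is only that no \emph{closed} essential surface of genus at most $g(S)-1$ can lie in $\Int(M_1)$ or $\Int(M_2)$, so no thin surface of the untelescoping is of that kind. From there you pass, via the phrase ``I would then isotope the whole generalized splitting so that \dots\ $F_1$ and $F_2$ themselves appear among the thin surfaces,'' to the conclusion that the splitting decomposes into generalized splittings of $M_1$, $M_0$, $M_2$. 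That step is not a routine isotopy: thin surfaces may cross $F_1$ or $F_2$, meeting $M_i$ in essential surfaces \emph{with boundary}, and, more seriously, the strongly irreducible thick surfaces may meet $M_1$ and $M_2$ in bicompressible surfaces with boundary. Controlling these intersections --- using Scharlemann-type bounds relating distance to the complexity of essential and bicompressible surfaces with boundary (as in \cite{S, ST}) and then showing the splitting can be untelescoped with $F_1,F_2$ as thin levels --- is the entire content of the theorem you need; ruling out closed essential surfaces interior to $M_1,M_2$ is far from sufficient. You flag this yourself as ``the main obstacle,'' but flagging it is not the same as overcoming it, so as written the proposal is incomplete precisely at the point where the hypothesis on $d$ has to do its work.

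The missing ingredient is exactly what the paper cites: Theorem 2 of Qiu--Wang--Zhang \cite{QWZ}, which states that when the Heegaard distances of the pieces are sufficiently large (compared with $g_1+g_2$), a minimal genus Heegaard splitting of the surface sum \emph{is} an amalgamation of splittings of $M_1$, $M_0$ (resp.\ $M_0^+$) and $M_2$ along $F_1$ and $F_2$. Granting that, the paper's entire proof is the genus count you perform at the end, and it yields both inequalities at once. So you should either invoke \cite{QWZ} (or the closely related amalgamation results in \cite{L4}) at the crucial step, or supply a complete proof of that statement; your sketch does not.
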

\begin{proof}
By assuming the Heegaard distance of $M_1$ and $M_2$ to be sufficiently large (compared with $g_1+g_2$), by \cite[Theorem 2]{QWZ}, a minimal genus Heegaard splitting of $E$ (resp.~$E^+$) is an amalgamation of splittings of $M_1$, $M_2$ and $M_0$ (resp.~$M_0^+$) along $F_1$ and $F_2$.  By Lemma~\ref{Lgenus}, $g(M_0)=2n$ and $g(M_0^+)=2n+1$.  Thus
$$g(E)=g(M_1)+g(M_0)+g(M_2)-g(F_1)-g(F_2)=g_1+2n+g_2-n-n=g_1+g_2,$$
$$g(E^+)=g(M_1)+g(M_0^+)+g(M_2)-g(F_1)-g(F_2)=g_1+g_2+1.$$
\end{proof}
\begin{remark}
The Heegaard genus of the tangle exteriors $M_1$ and $M_2$ can be as small as $n+1$.  Thus by Lemma~\ref{Lg}, $g(E(k))$ can be as small as $2n+2$.
\end{remark}

\begin{corollary}\label{Cg}
If $d$ is sufficiently large, then the knot $k$ is not $\mu$-primitive.
\end{corollary}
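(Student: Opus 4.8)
The plan is to deduce this from the genus computation of Lemma~\ref{Lg} together with the primitivity obstruction of Corollary~\ref{Cprim}; once those are in place the corollary is essentially immediate, since the auxiliary manifold $E^+$ has been built to be exactly the annulus sum that appears in Corollary~\ref{Cprim}.

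Concretely, I would first appeal to Lemma~\ref{Lg}. For $d$ sufficiently large it gives both $g(E(k)) = g_1+g_2$ and $g(E^+) = g_1+g_2+1$, and subtracting yields the single equality that does all the work, namely $g(E^+) = g(E(k))+1$. Next I would observe that, by its definition in Lemma~\ref{Lg}, $E^+ = E(k)\cup_A(T^2\times I)$ is an annulus sum identifying a meridional annulus $A\subset\partial E(k)$ to a nontrivial annulus in $T^2\times\partial I$; thus, taking $M = E^+$ and $N = T^2\times I$, the hypotheses of Corollary~\ref{Cprim} are met. Since $g(E^+) = g(E(k))+1$, Corollary~\ref{Cprim} then concludes directly that $k$ is not $\mu$-primitive, which is the assertion.

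The corollary is therefore a formal consequence, and the main obstacle lies entirely upstream in Lemma~\ref{Lg}, which in turn rests on the delicate genus computation $g(M_0^+)=2n+1$ of Lemma~\ref{Lgenus} and on the large-distance amalgamation theorem of \cite{QWZ}. The one point to double-check is that the ``sufficiently large $d$'' demanded here is no stronger than the hypothesis already secured in Lemma~\ref{Lg}, so that no new condition on the construction is introduced. Conceptually, the mechanism is that $g(T^2\times I)=1$: were $k$ meridionally primitive, Lemma~\ref{Lprim} would force the annulus-sum genus to satisfy $g(E^+) < g(E(k))+g(T^2\times I) = g(E(k))+1$, that is $g(E^+)\le g(E(k))$, contradicting the computed value $g(E(k))+1$.
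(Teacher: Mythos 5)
Your proof is correct and is exactly the paper's argument: the paper's own proof of this corollary is the one-line deduction from Lemma~\ref{Lg} and Corollary~\ref{Cprim}, which you have carried out in full, including the correct identification of $E^+$ with the annulus sum $M=E(k)\cup_A(T^2\times I)$ required by Corollary~\ref{Cprim}. Your closing remark about the underlying mechanism (via Lemma~\ref{Lprim} and $g(T^2\times I)=1$) matches the paper's discussion in Section~\ref{SA} as well.
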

\begin{proof}
This follows from Lemma~\ref{Lg} and Corollary~\ref{Cprim}.
\end{proof}

\section{Proof of the main theorem}
In this section, we prove Theorem~\ref{Tmain}.  Our main task is to give an upper bound on the Heegaard genus of the knot exterior $E(k'\# k)$.  The idea is that the bridge sphere of $k'$ and the punctured sphere which divides $k$ into two tangles can be put together in $E(k'\# k)$ to produce a Heegaard surface for $E(k'\# k)$.

Theorem~\ref{Tmain} follows from Lemma~\ref{Lmain}, Lemma~\ref{Lprime} and Corollary~\ref{Cg}.

\begin{lemma}\label{Lmain}
For any integer $n\ge 3$, let $k$ be the knot constructed in section~\ref{Sc}. If $d$ is sufficiently large, then for any nontrivial knot $k'$ which admits an $m$-bridge sphere with $m\leq n$, $g(E(k'\# k))\le g(E(k))$.
\end{lemma}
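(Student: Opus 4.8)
The plan is to build, for $d$ sufficiently large, a Heegaard surface of $E(k'\# k)$ of genus $g_1+g_2$, which by Lemma~\ref{Lg} equals $g(E(k))$; this is exactly the desired inequality $g(E(k'\# k))\le g(E(k))$. Writing the connected sum as an annulus sum, I view $E(k'\# k)=M_1\cup M_0'\cup M_2$, where $M_0'=M_0\cup_{A_0}E(k')$ is obtained from $M_0$ by gluing $E(k')$ along the meridional annulus $A_0\subset T\times\{0\}$, in exact analogy with the construction of $M_0^+$ but with $E(k')$ in place of $T^2\times I$. Since $E(k')$ has connected boundary, $M_0'$ has the same boundary pattern as $M_0$, namely $F_1\cup F_2\cup T_0$ with $T_0$ a single torus formed from $(T\times\{0\})-A_0$ and $\partial E(k')-A_0$. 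Because $F_1$ and $F_2$ are incompressible in each of $M_1$, $M_2$ and $M_0'$, I may amalgamate Heegaard splittings of the three pieces along $F_1$ and $F_2$. Thus it suffices to construct a genus-$2n$ Heegaard splitting of $M_0'$ in which $F_1$ and $F_2$ lie on opposite sides as $\partial_-$ components: amalgamating it with minimal splittings of $M_1$ and $M_2$ then produces a Heegaard surface of $E(k'\# k)$ of genus $g_1+2n+g_2-g(F_1)-g(F_2)=g_1+g_2$, since $g(F_1)=g(F_2)=n$. The whole lemma therefore reduces to the inequality $g(M_0')\le 2n$.

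To bound $g(M_0')$ I modify Construction~1 for $M_0$. There the genus-$2n$ surface $S_F$ is built from a parallel copy $F_1'$ of $F_1$ by adding $n$ tubes near $T\times\{1\}$ that cyclically connect the boundary annuli $A_1,\dots,A_{2n}$; maximally compressing $S_F$ away from these tubes recovers $F_2$ together with the torus $T=T\times\{0\}$, which is precisely where $A_0$ and now $E(k')$ sit. Using that $k'$ is $m$-bridge with $m\le n$, I take a bridge sphere $P'$ of $k'$, whose intersection $\Pi=P'\cap E(k')$ is a $2m$-punctured sphere meeting $\partial E(k')$ in $2m$ meridians, i.e.~curves parallel to the core of $A_0$. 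The key move is to reroute $m$ of the $n$ available tubes so that, rather than running parallel to $T\times\{1\}$, they pass through $E(k')$ and, together with $\Pi$, realize the genus-$m$ bridge splitting $E(k')=C'\cup_{\Sigma'}H'$; this is the sense in which the bridge sphere of $k'$ and the tangle sphere of $k$ are ``put together.'' The remaining $n-m$ tubes stay as cyclic tubes as in Construction~1. Since adding $n$ tubes to the genus-$n$ surface $F_1'$ gives a genus-$2n$ surface no matter how the tubes are routed, the resulting closed surface $S'$ has genus $2n$.

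It then remains to check that $S'$ is a Heegaard surface of $M_0'$ with $F_1$ and $F_2$ on opposite sides. Maximally compressing $S'$ toward $F_1$ recovers $F_1$ exactly as before, since that side is untouched. On the other side I must verify that maximal compression returns $F_2$ together with the single torus $T_0$. Here I use that the two tangles in the $m$-bridge presentation of $k'$ are trivial, so each over-bridge is an unknotted arc and $C'$, $H'$ are genuine compression bodies; threading the $m$ bridges through $m$ of the tubes therefore absorbs $E(k')$ into the torus-side compression body, contributing exactly the torus $T_0$ to $\partial_-$ with no stabilization. This is where $m\le n$ is indispensable: there must be at least $m$ tubes available to carry the $m$ bridges, for otherwise $E(k')$ cannot be incorporated without raising the genus, which is consistent with the failure of the conclusion once the bridge number exceeds $n$.

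I expect the last verification to be the main obstacle, namely making rigorous that rerouting $m$ tubes through the bridge sphere simultaneously (i) keeps the $F_1$-side a product-with-handles over $F_1$ and (ii) makes the union of the $F_2$-side with $E(k')$ a compression body with $\partial_-=F_2\cup T_0$ — that is, that the triviality of the bridge tangles and the count $m\le n$ exactly match the $n$-tube budget of Construction~1. The way I would carry this out is to describe all the compressions explicitly in the schematic $X\times S^1$ picture of Figure~\ref{FHeeg}, arranging every compressing disk to be disjoint from $\Sigma\times I$ and letting the bridge disks of $k'$ supply the compressions needed on the torus side, so that the maximal compression on that side yields $F_2\cup T_0$ and $S'$ is confirmed to be a Heegaard surface of genus $2n$.
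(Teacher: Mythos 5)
Your global strategy is the same as the paper's: write $E(k'\# k)=M_1\cup\bigl(E(k')\cup_{A_0}M_0\bigr)\cup M_2$, produce a genus-$2n$ Heegaard surface of the middle piece having $F_1$, $F_2$ and the peripheral torus as $\partial_-$ components of its compression bodies, amalgamate with minimal splittings of $M_1$ and $M_2$, and invoke Lemma~\ref{Lg} to get genus $g_1+g_2=g(E(k))$. That reduction and the amalgamation genus count are correct, and they match the paper. The ansatz for the middle-piece surface is also not unreasonable in form: any Heegaard splitting of the middle piece with one compression body having $\partial_-$ equal to a single $F_i$ is, up to isotopy, a pushed-in copy of $F_i$ with $n$ tubes attached along some arc system.

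The gap is that the construction of this surface is the entire content of the lemma, and your proposal does not establish it; worse, the mechanism you name for the key verification does not work. Your surface $S'$ meets $E(k')$ only in the $m$ annular tube walls: the punctured bridge sphere $\Pi$ is \emph{not} a subsurface of $S'$, so the bridge disks of $k'$ cannot ``supply the compressions needed on the torus side.'' A bridge disk has its boundary partly on the bridge sphere and partly along the knot; in your picture the bridge sphere lies in the interior of the region $W_2$ on the $F_2\cup T_0$ side (and the knot is not present at all), so its boundary does not lie on $\partial W_2=S'\cup F_2\cup T_0$ and it is not a compressing disk for $S'$. What would actually have to be proved is that $W_2$, which contains all of $E(k')$ minus $m$ solid tubes --- i.e.\ the exterior of the link consisting of $k'$ together with $m$ arcs --- is a compression body with $\partial_-=F_2\cup T_0$. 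Triviality of the upper bridge tangle and the count $m\le n$ do not by themselves give this, and no argument is offered; the routing and connectivity of the rerouted tubes (all of which must travel to the one place where $A_0$ sits) are not even pinned down.

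Note also that $m\le n$ enters the paper's proof in a completely different and essential way, which your ``tube budget'' reading misses. The paper first \emph{stabilizes} the $m$-bridge sphere to an $n$-bridge sphere (raising the bridge index is always possible; this is exactly where $m\le n$ is used), so that the resulting $2n$-punctured sphere $Q_1\subset E(k')$ has the same boundary pattern as the $2n$-punctured tangle sphere $Q_2\subset M_0$. The Heegaard surface is then $Q_1\cup Q_2'$, where $Q_2'$ is $Q_2$ with a single tube attached along $D_1$. Because the punctured bridge sphere is part of the Heegaard surface, the two sides of $Q_1$ in $E(k')$ --- the two trivial-tangle-complement handlebodies --- become halves of the two compression bodies, and the alternating compression behavior of $Q_1$ (down to the annuli $C_1,C_3,\dots,C_{2n-1}$ on one side and $C_2,C_4,\dots,C_{2n}$ on the other) matches that of $Q_2$ (whose union with the odd $D_i$'s is parallel to $F_1$ and with the even $D_i$'s is parallel to $F_2$). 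If you want to salvage the tube-rerouting picture you must prove the compression-body property for $W_2$ directly, and the first step would be to incorporate the bridge structure into the surface itself rather than into the tubes.
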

\begin{proof}
Recall that $E(k'\# k)$ is an annulus sum of $E(k')$ and $E(k)$.

Since $k'$ has an $m$-bridge sphere and since $m\leq n$, $k'$ also has a (possibly non-minimal) bridge sphere of index $n$.
We start with an $n$-bridge sphere for $k'$.  This bridge sphere for $k'$ corresponds to a $2n$-hole sphere $Q_1$ properly embedded in $E(k')$, and $\partial Q_1$ consists of $2n$ meridional circles.  The $2n$ circles in $\partial Q_1$ divide the torus $\partial E(k')$ into $2n$ annuli which we denote by $C_1,\dots,C_{2n}$ where $C_i$ is adjacent to $C_{i+1}$ for each $i$ ($C_{2n+1}=C_1$). Since $Q_1$ is a bridge sphere, if we maximally compress $Q_1$ on one side, we get a collection of $n$ $\partial$-parallel annuli parallel to $C_1, C_3,\dots, C_{2n-1}$, and if we maximally compress $Q_1$ on the other side, we get a collection of $n$ $\partial$-parallel annuli parallel to $C_2, C_4,\dots, C_{2n}$.

Recall that $k$ is constructed by gluing together two $n$-string tangles $B_1$ and $B_2$.  The punctured sphere $\partial B_1$ corresponds to a $2n$-hole sphere $Q_2$ properly embedded in $E(k)=M_1\cup M_0\cup M_2$, where $M_0$, $M_1$ and $M_2$ are as in section~\ref{Sc}.  We may assume that $Q_2\subset M_0$ and $Q_2$ is obtained by extending $\Sigma\times\{1/2\}$ to the boundary torus $\partial E(k)$.  The $2n$ circles in $\partial Q_2$ divide the torus $\partial E(k)$ into $2n$ annuli which we denote by $D_1,\dots,D_{2n}$ where $D_i$ is adjacent to $D_{i+1}$ for each $i$ ($D_{2n+1}=D_1$).  By our construction, the union of $Q_2$ and the $n$ annuli $D_1, D_3,\dots, D_{2n-1}$ is a closed surface parallel to a component, say $F_1$, of $\partial M_0$, and the union of $Q_2$ and the $n$ annuli $D_2, D_4,\dots, D_{2n}$ is a closed surface parallel to $F_2$.

Let $\gamma$ be an essential arc of the annulus $D_1$.  We push $\gamma$ slightly into the interior of $E(k)$ and call the resulting arc $\gamma'$ ($\partial\gamma'=\gamma'\cap Q_2$).
 We add a tube to $Q_2$ along $\gamma'$ and call the resulting surface $Q_2'$.  If we compress $Q_2'$ once along the meridian of the tube, we get back $Q_2$, and if we compress $Q_2'$ once on the other side, we get a $\partial$-parallel annulus parallel to $D_1$ and a surface isotopic to $Q_2\cup D_1$.

Let $\hat{C}$ be a meridional annulus in $\partial E(k')$ containing $C_2,\dots, C_{2n}$ with $\partial\hat{C}\subset C_1$, and let $\hat{D}$ be a meridional annulus in $\partial E(k)$ containing $D_2,\dots, D_{2n}$ with $\partial\hat{D}\subset D_1$.  By identifying $\hat{C}$ to $\hat{D}$, we obtain the manifold $E(k'\# k)$.  Moreover, we may assume that each $C_i$ is identified with $D_i$ ($i=2,\dots,2n$), and $Q=Q_1\cup Q_2'$ is a closed orientable surface of genus $2n$ embedded in $E(k'\# k)$.  Next we maximally compress $Q$ on either side and we perform the compressions on $Q_1$ and $Q_2'$ separately in $E(k')$ and $E(k)$ respectively.   By the compression properties of $Q_1$ and $Q_2'$ described earlier, we see that if we maximally compress $Q$ on one side (the $C_2$ and $D_2$ side), we get a surface parallel to $F_2$ (because the union of $Q_2$ and the $n$ annuli $D_2, D_4,\dots, D_{2n}$ is a closed surface parallel to $F_2$), and if we maximally compress $Q$ on the other side, we get two components: one is parallel to $F_1$ and the other is a torus parallel to the boundary torus $\partial E(k'\# k)$.  Thus $Q$ is a Heegaard surface of the submanifold $E(k')\cup M_0$ of $E(k'\# k)$.

The amalgamation (along $F_1$ and $F_2$) of this Heegaard splitting of $E(k')\cup M_0$ and the minimal genus Heegaard splittings of $M_1$ and $M_2$ is a Heegaard splitting of $E(k'\# k)$.  The genus of this amalgamated Heegaard splitting is $g(M_1)+g(Q)+g(M_2)-g(F_1)-g(F_2)=g(M_1)+2n+g(M_2)-n-n=g(M_1)+g(M_2)$.  This means $g(E(k'\# k))\le g(M_1)+g(M_2)$.  By Lemma~\ref{Lg}, $g(E(k))=g(M_1)+g(M_2)$.  Thus $g(E(k'\# k))\le g(E(k))$.
\end{proof}

%{\bf Remark.} (1) Let $k$ be a knot in $S^{3}$, $K$ has a $n$-bridge sphere decomposition. Then the distance of a minimal Heegaard splitting of $E(K)$ is upper bounded by a function of $n$. Otherwise, if $k_{1}$ is a high distance knot, then the minimal Heegaard splitting of $E(k_{1}\# k_{2})$ is the amalgamation of the ones of $E(K_{1})$ and $M_{0}^{+}$.

%(2) If both $k_{1}$ and $k_{1}^{'}$ has a $n$-bridge sphere decomposition with $n\geq 3$, then there is a prime non-meridionally primitive knot $K_{2}$ such that $g(E(k'\# k))\le g(E(k))$, and $g(E(k'^{'}\# k))\le g(E(k))$.

\end{document}